\documentclass[11pt]{article}
\usepackage{floatrow}
\floatsetup[table]{capposition=bottom}
\newfloatcommand{capbtabbox}{table}[][\FBwidth]
\usepackage{enumerate}
\usepackage{amsmath,cite,colordvi,xcolor,color}
\usepackage{amsthm}
\usepackage{amssymb}
\usepackage{amsfonts}
\usepackage{latexsym}
\usepackage{tikz}
\usepackage{float}
\usepackage{mathrsfs}[12pt]
\usepackage{multirow,multicol,subfigure}
\usepackage[colorlinks,urlcolor=purple,linkcolor=red,anchorcolor=green,citecolor=blue]{hyperref}
\usepackage{makecell}

\newtheorem{thm}{Theorem}[section]

\newtheorem{cor}[thm]{Corollary}
\newtheorem{lem}[thm]{Lemma}

\theoremstyle{definition}

\newtheorem{rem}[thm]{Remark}

\numberwithin{equation}{section}

\makeatletter \@addtoreset{equation}{section} \makeatother

\linespread{1.125}
\setlength{\textwidth}{155mm} \setlength{\textheight}{23cm}
\setlength{\headheight}{3cm} \setlength{\topmargin}{0pt}
\setlength{\headsep}{0pt} \setlength{\oddsidemargin}{0pt}
\setlength{\evensidemargin}{0pt}



\tikzstyle{every node}=[circle,inner sep=1pt,fill=white!60]
\tikzstyle{tn}=[shape=circle, draw, color=black!70]

\parindent 15pt
\voffset -25mm \rm
\parskip=6pt
\begin{document}

\begin{center}
{\bf \large
Congruences for the Coefficients of the Powers of the Euler Product
}

 \vspace{15pt}
{\small  Julia Q.D. Du$^{1}$, Edward Y.S. Liu$^{2}$ and Jack C.D. Zhao$^{3}$

$^{1}$Center for Applied Mathematics \\
Tianjin University, Tianjin 300072, P.R. China\\[6pt]

$^{2,3}$Center for Combinatorics, LPMC, \\
Nankai University, Tianjin 300071, P.R. China\\[6pt]

$^{1}$qddu@tju.edu.cn, $^{2}$liu@mail.nankai.edu.cn, $^{3}$cdzhao@mail.nankai.edu.cn
}

\end{center}

\noindent{\bf Abstract.} Let $p_k(n)$ be given by the $k$-th power of the Euler Product $\prod _{n=1}^{\infty}(1-q^n)^k=\sum_{n=0}^{\infty}p_k(n)q^{n}$.
By
investigating the properties of the modular equations of the second and the third order under
the Atkin $U$-operator,
we determine the generating functions of $p_{8k}(2^{2\alpha} n +\frac{k(2^{2\alpha}-1)}{3})$ $(1\leq k\leq 3)$
and $p_{3k}(3^{2\beta}n+\frac{k(3^{2\beta}-1)}{8})$ $(1\leq k\leq 8)$
in terms of some linear recurring sequences.
Combining with a result of Engstrom about the periodicity of linear recurring sequences modulo $m$,
we obtain infinite families of congruences for $p_k(n)$  modulo any $m\geq2$,
where $1\leq k\leq 24$ and $3|k$ or $8|k$.
Based on these congruences for $p_k(n)$,
infinite families of congruences for many partition functions such as
the overpartition function,
$t$-core partition functions and $\ell$-regular partition functions
are easily obtained.

\noindent{\bf Keywords:} the Euler Product,
congruences, modular equations, partition functions.

\noindent{\bf AMS Classification:} 11P83, 05A15, 05A17, 05A30.

\section{Introduction}
 This paper is devoted to the congruence properties of $p_k(n)$ modulo arbitrary integer $m\geq 2$,
where $p_k(n)$ is defined by the $k$-th power of the Euler Product
\begin{align}\label{Def_Euler_Product}
  f_1^k:=\prod_{n=1}^\infty(1-q^n)^k=\sum_{n=0}^\infty p_k(n)q^n.
\end{align}
When $k=1$, $f_1$ is called the Euler Product,
and by Euler's pentagonal number theorem \cite{Euler}, \cite[pp.~11]{Andrews-book},
we see that
\[p_1(n)=	
  \begin{cases}
    (-1)^m, & \hbox{if $n=m(3m\pm1)/2$;} \\
    0, & \hbox{otherwise.}
  \end{cases}
\]
When $k=-1$, $p_{-1}(n)$ is the ordinary partition function $p(n)$,
which counts the number of ways of writing $n$ as a sum of a non-increasing sequence of  positive integers.

Recently, the congruences for $p_k(n)$ have drawn much more attention
since many partition functions possess the same congruence properties as  $p_k(n)$,
such as $\ell$-regular partition functions,
the overpartition function, $t$-core partition functions.
Recall that for any $\ell\ge 2$,
a partition is called an
\emph{$\ell$-regular partition }if none of its parts is divisible by $\ell$.
Let $b_\ell(n)$ denote the number of $\ell$-regular partitions of $n$,
and the generating function of $b_\ell(n)$ is given by
\[\sum_{n=0}^{\infty}b_\ell(n)q^n=\prod_{n=1}^{\infty}
\frac{(1-q^{\ell n})}{(1-q^n)}\]
with the convention that $b_\ell(0)=1$.
For any prime $\ell$ and $j\geq 0$,
it is easy to see that
\[\sum_{n=0}^{\infty}b_{\ell^j}(n)q^n\equiv f_1^{\ell^j-1}\pmod{\ell}.\]
Combining with \eqref{Def_Euler_Product},
we have for any $n\geq0$,
\[b_{\ell^j}(n)\equiv p_{\ell^j-1}(n)\pmod{\ell}.\]
In \cite{Cui-Gu-Huang-2016},
Cui, Gu and Huang obtained some infinite families of congruences for $p_k(n)$ ($1\leq k\leq 24$) modulo 2 or 3 by using the modular equations of the fifth and the seventh order.
To be specific,
they established the following congruences
\[p_k\left(\ell^{t\alpha}n+\frac{(24i+\ell k)\cdot \ell^{t\alpha-1}-k}{24}\right)\equiv 0\mod {2 ~\text{or}~3},\]
where $1\leq k\leq 24$,
$\alpha\geq 1$ and $\ell = 5$ or $7$,
the positive integer $t$ is  determined by $k$, $\ell$ and the modulus 2 or 3.
Moreover, they also obtained many infinite
families of congruences for $\ell$-regular partition functions
and generalized Frobenius partition functions.
For example,
they proved that
for any $\alpha\geq1$, $n\geq0$ and $1\leq i\leq4$,
\[b_{16}\left(5^{4\alpha}n+i\cdot5^{4\alpha-1}+\frac{3\cdot 5^{4\alpha+1}-15}{24}\right)\equiv 0\pmod{2}.\]
Later,
Xia \cite{Xia2017}  used the modular equations of the  fifth and the seventh,
as well as the thirteenth order to
establish the generating functions of
\[p_k\left(\ell^{2\alpha+1}n+\frac{k(\ell^{2\alpha+2}-1)}{24}
-\ell^{2\alpha+1}t(\ell,k)\right),\]
where $1\leq k\leq 24$, $\alpha\geq 0$ and $\ell=5$, $7$ or $13$ and $t(5,k)=\lfloor\frac{k}{5}\rfloor$, $t(7,k)=\lfloor\frac{2k}{7}\rfloor$ and $t(13,k)=\lfloor\frac{7k}{13}\rfloor$, respectively.
Based on the above generating functions,
he derived that
for a fixed prime $\ell\in\{5,7,13\}$, $1\leq k \leq 24$ and any $m\geq2$,
if there exists $r(k,m)$ satisfying some restricted conditions,
then for any $\alpha\geq1$, $n\geq0$ and $1\leq i\leq \ell-1$,
\begin{align*}
  p_k\left(\ell^{2r(k,m)\alpha} n +
  \frac{(24i+\ell k)\cdot\ell^{2r(k,m)\alpha-1}-k}{24}\right)
  \equiv0
  \pmod{m}.
\end{align*}
Moreover,
he obtained many infinite families of congruences for
$\ell$-regular partition functions,
partition functions related to mock theta functions
and generalized Frobenius partition functions.
For example,
he proved that for any $\alpha\geq1$, $n\geq0$ and $1\leq i\leq12$,
\begin{align*}
  b_{23}\left(13^{22\alpha} n +\frac{(12i+143)\cdot13^{22\alpha-1}-11}{12}\right)\equiv 0\pmod{23}.
\end{align*}

In this paper,
we determine the generating functions of $p_{8k}(2^{2\alpha} n +\frac{k(2^{2\alpha}-1)}{3})$ $(1\leq k\leq 3)$
and $p_{3k}(3^{2\beta}n+\frac{k(3^{2\beta}-1)}{8})$ $(1\leq k \leq 8)$  based on the modular equations of the second and the third order.
Furthermore, combining with a result of Engstrom \cite{Engstrom1931}
about the periodicity of linear recurring sequences modulo $m$,
we obtain infinite families congruences for $p_{8k}(n)$ and $p_{3k}(n)$ with respect to arbitrary modulus $m\geq2$.

To state our results more precisely,
we adopt the notion
\[f_t:=\prod_{n=1}^{\infty}(1-q^{nt})\]
for any integer $t\geq1$.
Note that the case of $t=1$ is coincident with the Euler Product.
By means of the modular equations of the second and the third order,
we derive the following generating functions.

\begin{thm}\label{a8k_GF}
For any $\alpha\geq0$ and $1\leq k \leq 3$,
we have
\begin{align}\label{a8k_GF_1-eq}
 \sum\limits_{n=0}^\infty p_{8k}\Big(2^{2\alpha} n +\frac{k(2^{2\alpha}-1)}{3}\Big)q^n
 = A_k(2\alpha)f_1^{8k} + B_{k}(2\alpha)q^{\lfloor \frac{k}{2}\rfloor}{f_1^{8k-24}}{f_2^{24}}
\end{align}
and
\begin{equation}
\begin{split}
\sum\limits_{n=-\left\lfloor k/2\right\rfloor}^\infty p_{8k}\left(2^{2\alpha+1} n +\frac{k(2^{2\alpha+2}-1)}{3}\right)q^n
=A_k(2\alpha+1)f_2^{8k} + B_{k} (2\alpha+1)q^{-\lfloor\frac{k}{2}\rfloor}f_1^{24}f_2^{8k-24},\label{a8k_GF_2}
\end{split}
\end{equation}
 where the coefficients
$A_k(\alpha)$ and $B_k(\alpha)$
satisfy the same recurrence relation
\begin{equation}\label{2-recurring-relations}
\begin{split}
A_k(\alpha+4) &= f(k)A_k(\alpha+2) + g(k)A_k(\alpha).
\end{split}
\end{equation}
The values of $f(k)$ and $g(k)$  $(1\leq k\leq 3)$
and the initial values of $A_k$ and $B_k$ $(1\leq k\leq 3)$ are listed in Table \ref{coeff_initial}.
\end{thm}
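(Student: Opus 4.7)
The plan is to proceed by induction on $\alpha$, using the Atkin $U_2$-operator $U_2(\sum_n a_n q^n) = \sum_n a_{2n} q^n$. The elementary identity $\sum_n a(2n+r)q^n = U_2(q^{-r} A(q))$ lets us pass from index $\alpha$ to $\alpha+1$ in \eqref{a8k_GF_1-eq}/\eqref{a8k_GF_2} by a single application of $U_2$ after an appropriate $q$-shift. The base case $\alpha=0$ of \eqref{a8k_GF_1-eq} gives $A_k(0)=1$, $B_k(0)=0$, which is exactly \eqref{Def_Euler_Product}; the values $A_k(\alpha), B_k(\alpha)$ for $\alpha=1,2,3$ needed to seed the recurrence will be obtained by applying $U_2$ to the base a handful of times and fill the first rows of Table~\ref{coeff_initial}.

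The heart of the argument is the following claim. For each $k \in \{1,2,3\}$, the $U_2$-operator (combined with the relevant $q$-shift) sends the two-dimensional $\mathbb{Q}$-space
\[V_k^{\mathrm{e}} := \bigl\langle f_1^{8k},\ q^{\lfloor k/2\rfloor} f_1^{8k-24} f_2^{24}\bigr\rangle\]
into the two-dimensional space
\[V_k^{\mathrm{o}} := \bigl\langle f_2^{8k},\ q^{-\lfloor k/2\rfloor} f_1^{24} f_2^{8k-24}\bigr\rangle,\]
and vice versa. Writing the corresponding transition matrices as $M_k$ (even-to-odd) and $N_k$ (odd-to-even), two successive applications of $U_2$ realise the self-map $T_k := N_k M_k$ on $V_k^{\mathrm{e}}$. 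Since $M_k N_k$ and $N_k M_k$ share the same characteristic polynomial, the Cayley--Hamilton identity for $T_k$ yields the common recurrence \eqref{2-recurring-relations} for both $A_k$ and $B_k$, simultaneously along the even-indexed and the odd-indexed subsequences of $\alpha$, with $f(k)=\operatorname{tr} T_k$ and $g(k)=-\det T_k$.

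The main obstacle is deriving the modular equations of order 2 that identify $U_2(f_1^{8k})$ and $U_2(q^{\lfloor k/2\rfloor}f_1^{8k-24}f_2^{24})$ (and their odd counterparts) explicitly in the target bases. The natural route is to pick an $\eta$-quotient Hauptmodul on $X_0(2)$, express each form in play as a polynomial in this Hauptmodul times a fixed weight-carrying factor, and then invoke the classical order-2 modular equation relating the Hauptmodul at $q$ and at $q^2$ to evaluate $U_2$ on each power. The resulting $q$-series identities need only be verified to the finite precision dictated by the valence formula on $\Gamma_0(2)$. Once these identities are in place, the entries of $M_k$ and $N_k$, hence the explicit values of $f(k)$ and $g(k)$ listed in Table~\ref{coeff_initial}, follow by linear algebra, and the initial values of $A_k$ and $B_k$ are confirmed by direct $q$-expansion from the base case.
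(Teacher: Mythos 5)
Your plan is essentially the paper's own argument: the paper also works with the Atkin $U_2$-operator on exactly the two-dimensional spaces spanned by $f_1^{8k}$, $q^{\lfloor k/2\rfloor}f_1^{8k-24}f_2^{24}$ and $f_2^{8k}$, $q^{-\lfloor k/2\rfloor}f_1^{24}f_2^{8k-24}$, proving closure (Lemma \ref{positivity-2}) via the level-2 Hauptmodul $S=f_1^{24}/(qf_2^{24})$, $T=f_1^8/(qf_4^8)$ and Jacobi's modular equation $S(q^2)=T^2+16T$, and then reads off the even-to-odd and odd-to-even linear maps whose composition gives the order-two recurrence \eqref{2-recurring-relations} with the stated initial values. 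Your transition-matrix/Cayley--Hamilton packaging (with $f(k)=\operatorname{tr}T_k$, $g(k)=-\det T_k$) and the valence-formula verification of the $U_2$-evaluations are only cosmetic variants of the paper's explicit coefficient matching and its inductive proof of Lemma \ref{h2_yield_polynomial}, so the proposal is correct and follows the same route.
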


\begin{thm}\label{a3k_GF}
 For any $\beta\geq 0$ and $1\leq k\leq 8$, we have
\begin{align}\label{a3k_gf}
    \sum_{n=0}^\infty p_{3k}\left(
    3^{2\beta}n+\frac{k(3^{2\beta}-1)}{8}\right)q^n
    =C_{k}(2\beta) f_1^{3k}+\sum_{i=1}^{\left\lfloor k/3 \right\rfloor}
    D_{k,i}(2\beta)q^{i}f_1^{3k-12i}f_3^{12i}
\end{align}
and
\begin{equation}\label{a3k_gf1}
\begin{split}
\sum_{n=-\lfloor k/3 \rfloor}^\infty p_{3k}\left(
    3^{2\beta+1}n+\frac{k(3^{2\beta+2}-1)}{8}\right)q^n
    = C_{k}(2\beta+1) f_3^{3k}+
  \sum_{i=1}^{\left\lfloor k/{3}\right\rfloor}D_{k,i}(2\beta+1)
  q^{-i} f_1^{12i}f_3^{3k-12i},
\end{split}
\end{equation}
where the coefficients  $C_k(\beta)$ and $D_{k,i}(\beta)$
satisfy the same recurrence relation
\begin{equation}
\begin{split}
\label{3-recurring-relations}
C_k(\beta+4) &= h(k)C_k(\beta+2) + r(k)C_k(\beta).
\end{split}
\end{equation}
The values of $h(k)$ and $r(k)$ $(1\leq k\leq 8)$
and the initial values of $C_k$ and $D_{k,i}$ $(1\leq k\leq 8)$
are listed in Table \ref{coeff_initial-3}.
\end{thm}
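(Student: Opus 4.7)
The plan is to prove both \eqref{a3k_gf} and \eqref{a3k_gf1} by simultaneous induction on $\beta$, with the inductive step governed by the Atkin $U_3$-operator and the modular equation of the third order invoked earlier in the paper. The even-index identity \eqref{a3k_gf} at $\beta$ is converted into the odd-index identity \eqref{a3k_gf1} at $\beta$ by one application of $U_3$ (after a suitable multiplication by a negative power of $q$ to match the shift $k(3^{2\beta+2}-1)/8$ against $3\cdot k(3^{2\beta}-1)/8$), and the latter is then converted into \eqref{a3k_gf} at $\beta+1$ by another such application.

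For the base case $\beta=0$ of \eqref{a3k_gf}, the right-hand side must collapse to $f_1^{3k}=\sum p_{3k}(n)q^n$, forcing $C_k(0)=1$ and $D_{k,i}(0)=0$. For the base case of \eqref{a3k_gf1} at $\beta=0$, I would extract $\sum_n p_{3k}(3n+k)q^n$ directly from $f_1^{3k}$ by the third-order modular equation and expand in the spanning set $\{f_3^{3k}\}\cup\{q^{-i}f_1^{12i}f_3^{3k-12i}:1\le i\le\lfloor k/3\rfloor\}$ to read off $C_k(1)$ and $D_{k,i}(1)$; these should match the entries of Table \ref{coeff_initial-3}.

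For the inductive step, I would write the right-hand side of \eqref{a3k_gf} at $\beta$ as a coefficient vector $v_e(\beta):=(C_k(2\beta),D_{k,1}(2\beta),\ldots)^{\!\top}$ in the \emph{even basis} $\mathcal{B}_e:=\{f_1^{3k}\}\cup\{q^{i} f_1^{3k-12i}f_3^{12i}:1\le i\le\lfloor k/3\rfloor\}$, and similarly an odd vector $v_o(\beta)$ in the \emph{odd basis} $\mathcal{B}_o:=\{f_3^{3k}\}\cup\{q^{-i}f_1^{12i}f_3^{3k-12i}\}$. The third-order modular equation expresses the $U_3$-image of every element of $\mathcal{B}_e$ as an explicit linear combination of elements of $\mathcal{B}_o$ (and vice versa), yielding $\beta$-independent matrices $P_k,Q_k$ with $v_o(\beta)=P_k v_e(\beta)$ and $v_e(\beta+1)=Q_k v_o(\beta)$. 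Thus the shift $\beta\mapsto\beta+1$ in the even family is implemented by $M_k:=Q_kP_k$, and the recurrence \eqref{3-recurring-relations} amounts to the identity $M_k^{\,2}=h(k)M_k+r(k)I$ on the cyclic subspace generated by $v_e(0)$ (and correspondingly $P_kQ_k$ on the odd side).

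The principal technical obstacle is two-fold. First, for each $1\le k\le 8$ one must carry out the explicit $U_3$-expansions giving $P_k$ and $Q_k$ from the third-order modular equation: an algorithmic but finite bookkeeping task, whose output has to agree with the initial values in Table \ref{coeff_initial-3} when propagated one step. Second, and more delicately, for $k\in\{6,7,8\}$ the bases have size three so $M_k$ is $3\times 3$; the theorem asserts that $M_k$ nonetheless satisfies a \emph{quadratic} relation on the relevant orbit, which is what allows the single recurrence \eqref{3-recurring-relations} of order two (in shifts by $2$) to govern $C_k$ and all the $D_{k,i}$ simultaneously. Establishing this minimality, together with the claimed numerical values of $h(k)$ and $r(k)$, is the heart of the argument; the remaining verifications are mechanical consequences of the $U_3$-action of the third-order modular equation.
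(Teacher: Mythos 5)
Your proposal is correct and follows essentially the same route as the paper: the paper's lemmas on $U_3\{Y^n\}$ and on $g_3G_3^{\beta}\{f_1^{3k}\}$ set up exactly your even/odd bases and transition maps (the systems \eqref{3-even-odd} and \eqref{3-odd-even} are your $P_k$ and $Q_k$), and the theorem is settled by matching recurrences and initial values case by case. The only point you single out as delicate---the quadratic relation for the $3\times 3$ cases $k\in\{6,7,8\}$---is in fact a routine finite check: once $M_k^{2}v_e(0)=h(k)M_kv_e(0)+r(k)v_e(0)$ is verified numerically, applying $M_k^{\beta}$ propagates the relation along the whole orbit and $P_k$ transports it to the odd-indexed sequences, which is precisely what the paper's ``direct calculations'' amount to.
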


Utilizing a result of Engstrom \cite{Engstrom1931},
we prove that for any $m\geq2$,
there always exist  positive integers $\alpha$ and $\beta$
satisfying
$B_k(2\alpha-1)\equiv 0\pmod m$ and
$D_{k,i}(2\beta-1)\equiv 0\pmod m$ for $1 \leq i \leq \left\lfloor\frac{k}{3}\right\rfloor$
in Theorem \ref{existence_of_mu_nu}.
Therefore we may define $\mu_m(k)$ and $\nu_m(k)$ by
\begin{align}\label{def_mu}
  \mu_m(k)=\min\{\alpha\geq1\mid B_k(2\alpha-1)\equiv 0\pmod m \},
\end{align}
and
\begin{align}\label{def_nu}
   \nu_m(k)=\min\left\{\beta\geq1\mid D_{k,i}(2\beta-1)\equiv 0\pmod m\ \text{ for }\ 1 \leq i \leq \left\lfloor\frac{k}{3}\right\rfloor\right\}.
\end{align}
In addition,
let $0\leq c_1\leq m-1$ be the integer such that $c_1\equiv A_k(2\mu_m(k)-1)\pmod m$,
and $0\leq c_2\leq m-1$ be the integer such that $c_2\equiv C_k(2\nu_m(k)-1)\pmod m$.

We can establish the following infinite families of congruences for $p_{8k}(n)$ and $p_{3k}(n)$ modulo $m$ with the aid of generating functions \eqref{a8k_GF_1-eq} and \eqref{a3k_gf}.
\begin{thm}\label{f1_8k_congruence}
 For any  $m\geq 2$ and $1\leq k \leq 3$,
 the following statements hold:
\begin{description}
\item{$(\mathrm{i})$}
If $c_1^\alpha \not\equiv 0 \pmod m$ for any $\alpha\ge 1$,
then for any $n\ge 0$,
\begin{align*}
p_{8k}\left(2^{2\mu_m(k)\alpha} n + \frac{(2k+3)\cdot 2^{2\mu_m(k)\alpha-1}-k}{3}\right) \equiv 0 \pmod m.
\end{align*}
\item{$(\mathrm{ii})$}
If there exists a positive integer $\alpha$ such that $c_1^\alpha \equiv 0 \pmod m$,
then for any $n\ge 0$,
\[p_{8k}\left(2^{2\mu_m(k)\alpha-1}n+\frac{k(2^{2\mu_m(k)\alpha}-1)}{3}\right) \equiv 0 \pmod m.\]
\end{description}
\end{thm}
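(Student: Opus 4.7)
The plan is to apply Theorem~\ref{a8k_GF} at the minimal half-integer level $2\mu_m(k)-1$, read off a single arithmetic recurrence modulo $m$, and then iterate. Substituting $\alpha\mapsto\mu_m(k)-1$ in \eqref{a8k_GF_2} and reducing modulo $m$, the defining relations $B_k(2\mu_m(k)-1)\equiv 0\pmod m$ and $A_k(2\mu_m(k)-1)\equiv c_1\pmod m$ collapse the right-hand side, giving
\[
\sum_{n\ge -\lfloor k/2\rfloor} p_{8k}\!\left(2^{2\mu_m(k)-1}n+N_1\right)q^n\equiv c_1\,f_2^{8k}\pmod m,
\]
where $N_1:=k(2^{2\mu_m(k)}-1)/3$. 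Since $f_2^{8k}\in\mathbb{Z}[[q^2]]$, comparing coefficients of $q^{2n+1}$ yields the odd-argument vanishing $p_{8k}(2^{2\mu_m(k)-1}(2n+1)+N_1)\equiv 0\pmod m$, while comparing coefficients of $q^{2n}$ (using $f_2^{8k}=\sum_j p_{8k}(j)q^{2j}$) produces the key recursion
\[
p_{8k}\!\left(2^{2\mu_m(k)}n+N_1\right)\equiv c_1\,p_{8k}(n)\pmod m. \qquad(\ast)
\]

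Next, I would bootstrap by induction on $\alpha$. Writing $X_\alpha(n):=p_{8k}\!\left(2^{2\mu_m(k)\alpha-1}n+N_\alpha\right)$ with $N_\alpha:=k(2^{2\mu_m(k)\alpha}-1)/3$, a direct calculation gives the telescoping identity $N_{\alpha+1}=2^{2\mu_m(k)}N_\alpha+N_1$, so that
\[
X_{\alpha+1}(n)=p_{8k}\!\left(2^{2\mu_m(k)}\bigl(2^{2\mu_m(k)\alpha-1}n+N_\alpha\bigr)+N_1\right)\equiv c_1\,X_\alpha(n)\pmod m
\]
by applying $(\ast)$ to the inner argument. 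Induction then yields $X_\alpha(n)\equiv c_1^{\alpha-1}X_1(n)\pmod m$ for all $\alpha\ge 1$ and $n\ge 0$.

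Parts~(i) and~(ii) follow by specialising $n$ to odd and even values respectively. For odd arguments $n\mapsto 2n+1$, the base vanishing $X_1(2n+1)\equiv 0\pmod m$ propagates to $X_\alpha(2n+1)\equiv c_1^{\alpha-1}\cdot 0\equiv 0\pmod m$, and the algebraic rewrite
\[
2^{2\mu_m(k)\alpha-1}(2n+1)+N_\alpha=2^{2\mu_m(k)\alpha}n+\frac{(2k+3)\,2^{2\mu_m(k)\alpha-1}-k}{3}
\]
recasts this as the congruence of part~(i); note that the hypothesis $c_1^\alpha\not\equiv 0\pmod m$ is not logically consumed here and merely distinguishes case~(i) from case~(ii). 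For even arguments $n=2n'$, iterating $(\ast)$ gives $X_\alpha(2n')\equiv c_1^\alpha p_{8k}(n')\pmod m$, which vanishes under the hypothesis $c_1^\alpha\equiv 0\pmod m$; combined with the odd-argument vanishing, this establishes $X_\alpha(n)\equiv 0\pmod m$ for every $n\ge 0$, which is exactly part~(ii).

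I expect the only non-trivial work beyond the clean invocation of Theorem~\ref{a8k_GF} to be the algebraic bookkeeping: integrality of $N_\alpha$ (immediate from $4\equiv 1\pmod 3$), the telescoping relation $N_{\alpha+1}=2^{2\mu_m(k)}N_\alpha+N_1$, and the final index rewrite. No further modular or number-theoretic input beyond Theorem~\ref{a8k_GF} is required.
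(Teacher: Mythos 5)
Your proposal is correct and is essentially the paper's own argument in coefficient-level language: your recursion $(\ast)$ is exactly the paper's operator step $G_2^{\mu_m(k)}\{f_1^{8k}\}\equiv c_1 f_1^{8k}\pmod m$ (extracting even coefficients is applying $U_2$), and your telescoping iteration of $N_\alpha$ reproduces the paper's iteration $g_2G_2^{\mu_m(k)\alpha-1}\{f_1^{8k}\}\equiv c_1^{\alpha}f_2^{8k}\pmod m$, after which parts (i) and (ii) are read off in the same way. Your side remark that the hypothesis in (i) is not logically consumed also matches the paper's proof, which likewise derives the odd-coefficient vanishing unconditionally.
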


\begin{thm}\label{3-congruence}
For any $m\geq 2$ and $1\leq k\leq 8$,
the following statements hold:
\begin{description}
\item{$(\mathrm{i})$}
If $c_2^\beta \not\equiv 0 \pmod m$ for any $\beta\ge 1$,
then for any $n\ge 0$ and $i=1$ or $2$,
\begin{align*}
p_{3k}\left(3^{2\nu_m(k)\beta} n +   \frac{(3k+8i)\cdot3^{2\nu_m(k)\beta-1}-k}{8}\right) \equiv 0 \pmod m.
\end{align*}

\item{$(\mathrm{ii})$}
If there exists a positive integer $\beta$
such that $c_2^\beta \equiv 0 \pmod m$,
then for any $n\ge 0$,
\[p_{3k}\left(3^{2\nu_m(k)\beta-1}n+\frac{k(3^{2\nu_m(k)\beta}-1)}{8}\right) \equiv 0 \pmod m.\]
\end{description}
\end{thm}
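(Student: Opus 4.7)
The plan is to mirror the argument of Theorem \ref{f1_8k_congruence} using the generating function identity \eqref{a3k_gf1} at the odd level $2\nu_m(k)-1$. Throughout, write $T := 2\nu_m(k)$ and $a_\beta := k(3^{T\beta}-1)/8$; one checks directly the telescoping identities $3^{T} a_{\beta-1} + a_1 = a_\beta$ and $3^{T(\beta-1)} a_1 + a_{\beta-1} = a_\beta$, which will be used repeatedly. The starting point is to specialize \eqref{a3k_gf1} at $\beta = \nu_m(k)-1$: by the definition \eqref{def_nu} of $\nu_m(k)$ and of $c_2$, every $D_{k,i}(T-1) \equiv 0 \pmod m$ and $C_k(T-1) \equiv c_2 \pmod m$, so the identity collapses modulo $m$ to
\[
\sum_{n \ge -\lfloor k/3 \rfloor} p_{3k}\bigl(3^{T-1} n + a_1\bigr) q^n \;\equiv\; c_2\, f_3^{3k} \;=\; c_2 \sum_{j\ge 0} p_{3k}(j)\, q^{3j} \pmod{m}.
\]

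From this I read off two base relations by matching residue classes modulo $3$: (a) $p_{3k}(3^T j + a_1) \equiv c_2\, p_{3k}(j) \pmod m$ for every $j \ge 0$, and (b) $p_{3k}(3^T j + 3^{T-1} r + a_1) \equiv 0 \pmod m$ for $r \in \{1,2\}$ and every $j \ge 0$. Relation (b), combined with the algebraic identity $3^{T-1} r + a_1 = ((3k+8r)\cdot 3^{T-1} - k)/8$, is exactly the $\beta = 1$ case of (i). I then iterate (a) by induction on $\beta$: substituting $j \mapsto 3^{T(\beta-1)} j + a_{\beta-1}$ in (a) and invoking the inductive hypothesis on the right gives
\[
p_{3k}\bigl(3^{T\beta} j + a_\beta\bigr) \;\equiv\; c_2^{\beta}\, p_{3k}(j) \pmod m \qquad (\beta \ge 1,\ j \ge 0), \qquad (\dagger)
\]
where the first shift identity matches the constant terms of the argument.

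For part (i) at arbitrary $\beta \ge 1$, I apply $(\dagger)$ at level $\beta-1$ with the choice $j = 3^T n + 3^{T-1} r + a_1$ (for $r\in\{1,2\}$ and $n \ge 0$); the right-hand side of $(\dagger)$ vanishes modulo $m$ thanks to (b), and the second shift identity rewrites the left-hand argument as $3^{T\beta} n + 3^{T\beta-1} r + a_\beta = 3^{T\beta} n + ((3k+8r)\cdot 3^{T\beta-1} - k)/8$, which is precisely the index appearing in (i). For part (ii), if $c_2^\beta \equiv 0 \pmod m$ then $(\dagger)$ directly yields $p_{3k}(3^{T\beta} j + a_\beta) \equiv 0 \pmod m$ for every $j \ge 0$; combining this with part (i), which handles the residue classes $r=1,2$, covers every residue of $n$ modulo $3$, and so $p_{3k}(3^{T\beta - 1} n + a_\beta) \equiv 0 \pmod m$ for all $n \ge 0$, as asserted.

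The only real difficulty is bookkeeping: keeping the parameters in Theorem \ref{a3k_GF} and Theorem \ref{3-congruence} straight, and verifying the two telescoping shift identities above, neither of which is substantial. One also notes that $\lfloor k/3 \rfloor \le 2$ for $1 \le k \le 8$, so the negative-$n$ tail on the left of the first displayed congruence only touches $q^{-1}$ and $q^{-2}$; since the right-hand side has only nonnegative powers of $q$, the matching automatically forces those finitely many $p_{3k}$-values on the left to be $\equiv 0 \pmod m$ as a free byproduct, consistent with (and strengthening) the theorem. No new analytic ingredient beyond Theorem \ref{a3k_GF} and the definitions of $\nu_m(k)$ and $c_2$ is required.
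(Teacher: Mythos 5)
Your proof is correct and follows essentially the same route as the paper: both start from the specialization of \eqref{a3k_gf1} at $\beta=\nu_m(k)-1$, which by the definitions of $\nu_m(k)$ and $c_2$ collapses to $g_3G_3^{\nu_m(k)-1}\{f_1^{3k}\}\equiv c_2 f_3^{3k}\pmod m$, and then iterate this $\beta$ times before splitting into residue classes modulo $3$. The only difference is presentational: you carry out the iteration coefficient-wise via the telescoping identities on the arguments, whereas the paper iterates at the operator level with $U_3$ and $g_3G_3^{\nu_m(k)-1}$ to reach \eqref{3-cong-reason}; the substance is identical.
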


It is worth mentioning that there are another two approaches
to obtain some cases of
Theorem \ref{f1_8k_congruence} and Theorem \ref{3-congruence}.
For the first one,
the case of $k=1$ in Theorem \ref{f1_8k_congruence}
and the case of $k=2$ in Theorem \ref{3-congruence}
can also be obtained from
Newman's Theorem \cite[Theorem 1]{Newman}.
For the second one,
the cases of $(k,m)=(1,\geq 2)$ in Theorem \ref{f1_8k_congruence} and the cases of $(k,m)=(2,\geq 2)$,
$(k,m)=(3,12)$, $(k,m)=(4,12)$, $(k,m)=(5,1836)$, $(k,m)=(7,53028)$  in Theorem \ref{3-congruence}
can be deduced
with the aid of eigenforms.
For more details,
please refer to \cite{Cui-Gu-Huang-2016} and \cite{Ono-2004}.

As applications of
Theorem \ref{f1_8k_congruence} and Theorem \ref{3-congruence},
we obtain many infinite families of congruences for different kinds of partition functions.
For example,
we derive some congruences for $\ell$-regular partition functions modulo
$2$, $3$, $5$, $17$ and $19$,
some of which are stated as follows:
for any $\alpha\geq 1$, $\beta\geq 0$ and $i=1$ or $2$,
\[b_{25}\left(2^{4\beta}3^{4\alpha}\cdot n+(i+3)\cdot 2^{4\beta}3^{4\alpha-1}-1\right)\equiv 0\pmod 5\]
and
\[b_{25}\left(2^{4\alpha}3^{4\beta}\cdot n+2^{4\alpha-1}3^{4\beta+1}-1\right)\equiv 0\pmod 5.\]
We also obtain some infinite families of congruences for the overpartition function and $t$-core partition functions by applying Theorem \ref{f1_8k_congruence} and Theorem \ref{3-congruence}.
Recall that an \emph{overpartition} of $n$ is a partition of $n$ where the first occurrence of each distinct part may be overlined \cite{Corteel-Lovejoy-2004}.
Denote by $\overline{p}(n)$ the number of overpartitions of $n$.
The congruences for $\overline{p}(n)$ have been extensively studied,
see, for example,
\cite{Mahlburg-2004, Fortin-2005, Treneer-2006, Hirschhorn-Sellers-2005, Chen-2014}.
We obtain some new infinite families of congruences for the overpartition function
modulo the powers of $2$,
 \[\overline{p}(4\cdot 3^{2\alpha}n+(4i+3)\cdot 3^{2\alpha-1})\equiv 0 \pmod{2^2},\]
\[\overline{p}(4\cdot 3^{2\alpha}n+(4i+6)\cdot 3^{2\alpha-1})\equiv 0 \pmod{2^3},\]
and
\[\overline{p}(4\cdot 3^{4\alpha}n+(4i+9)\cdot 3^{4\alpha-1})\equiv 0 \pmod{2^4},\]
where $\alpha\ge 1$, $n\ge 0$ and $i=1$ or $2$.

For $t\geq1$,
a partition is called a \emph{$t$-core partition} if none of its hook lengths is divisible by $t$.
Let $a_t(n)$ denote the number of $t$-core partitions of $n$.
For congruences for $t$-core partition functions, see, for example,
\cite{Hirschhorn-Sellers-1996-1,Hirschhorn-Sellers-1996-2,Ono-Sze-1997,Garvan-1990, Garvan-1993}.
We obtain some infinite families of congruences for $2$-core and $4$-core partition functions,
for example, for any $\alpha\ge 1, \beta\ge 0, n\ge 0$ and $i=1$ or $2$,
we have
\begin{align*}
a_2\left(3^{2\alpha} n + 3^{2\alpha-1}i+ \frac{3^{2\alpha}-1}{8}\right) \equiv 0 \pmod 2,
\end{align*}
\[a_2\left(\frac{3^{2\beta}-1}{8}\right) \equiv 1\pmod 2,\]
and
\[a_4\left(3^{2\alpha} n +  \frac{(8i+15)\cdot 3^{2\alpha-1}-5}{8}\right) \equiv 0 \pmod 2.\]
Note that this yields a special case of Hirschhorn and Seller's conjecture \cite{Hirschhorn-Sellers-1999}  when $\alpha=1$.
This conjecture has been proved by Chen \cite{Chen2013}.

The rest of this paper is organized as follows.
In Section \ref{secofmodular},
we give the  proofs of
Theorem \ref{a8k_GF} and Theorem  \ref{a3k_GF} by investigating the properties of the modular equations of the second and the third order under the Atikin $U$-operator.
In Section \ref{secofgenerating},
we first prove the existence of $\mu_m(k)$ and $\nu_m(k)$ via
linear recurrence relations \eqref{2-recurring-relations} and \eqref{3-recurring-relations}.
Combining this with the generating functions in Theorem \ref{a8k_GF} and Theorem \ref{a3k_GF},
we give the proofs of
Theorem \ref{f1_8k_congruence} and Theorem \ref{3-congruence}.
Section \ref{secofcongruences} is devoted to
infinite families of congruences for the overpartition function, $t$-core partition functions and $\ell$-regular partition functions with the aid of Theorem \ref{f1_8k_congruence} and Theorem \ref{3-congruence}.


\section{Generating functions and modular equations}\label{secofmodular}

To derive the generating functions of $p_{8k}(2^{2\alpha}n+\frac{k(2^{2\alpha}-1)}{3})$ $(1\leq k \leq 3)$
and
$p_{3k}(3^{2\beta}n+\frac{k(3^{2\beta}-1)}{8})$ $(1\leq k \leq 8)$,
we first introduce the following three operators acting on $\sum\limits_{n=-\infty}^\infty a(n)q^n$.
Let $p=2$ or $3$ and define
\begin{align*}
U_p\left\{\sum\limits_{n=-\infty}^\infty a(n)q^n\right\} & = \sum\limits_{n=-\infty}^\infty a(pn)q^{n},\\[5pt]
g_p(k)\left\{\sum\limits_{n=-\infty}^\infty a(n)q^n\right\} & = \sum\limits_{n=-\infty}^\infty a(pn+k)q^n,\\[5pt]
G_p(k)\left\{\sum\limits_{n=-\infty}^\infty a(n)q^n\right\} & = \sum\limits_{n=-\infty}^\infty a(p^2n+k)q^n.
\end{align*}
Clearly, we have $U_pg_p(k)= G_p(k)$.
Note that $U_p$ is the Atkin $U$-operator \cite{Atkin-1970},
and Cui, Gu and Huang \cite{Cui-Gu-Huang-2016} introduced the notation of $g_p$ and $G_p$ for $p=5$ or $7$.

In the notation of these three operators,
the left hand sides of \eqref{a8k_GF_1-eq}, \eqref{a8k_GF_2}, \eqref{a3k_gf} and \eqref{a3k_gf1} can be written as
\begin{align*}
     G_2^\alpha(k)\{f_1^{8k}\}&=\sum_{n=0}^\infty p_{8k}\left(2^{2\alpha} n +\frac{k(2^{2\alpha}-1)}{3}\right)q^n,\\[5pt]
     g_2G_2^\alpha(k)\{f_1^{8k}\}&=\sum_{n=-\left\lfloor k/2\right\rfloor}^\infty p_{8k}\left(2^{2\alpha+1} n +\frac{k(2^{2\alpha+2}-1)}{3}\right)q^n,\\[5pt]
     G_3^\beta(k)\{f_1^{3k}\}&=\sum_{n=0}^\infty p_{3k}\left(
    3^{2\beta}n+\frac{k(3^{2\beta}-1)}{8}\right)q^n,
\end{align*}
and
\begin{align*}
g_3G_3^\beta(k)\{f_1^{3k}\}&=\sum_{n=-\lfloor k/3 \rfloor}^\infty p_{3k}\left(3^{2\beta+1}n+\frac{k(3^{2\beta+2}-1)}{8}\right)q^n.
\end{align*}

Thus, the proof of Theorem \ref{a8k_GF} (resp. Theorem \ref{a3k_GF})
is equivalent to compute $G_2^\alpha(k)\{f_1^{8k}\}$ and
$g_2G_2^\alpha(k)\{f_1^{8k}\}$
(resp. $G_3^\beta(k)\{f_1^{3k}\}$ and $g_3G_3^\beta(k)\{f_1^{3k}\}$).

In order to obtain $G_2^\alpha(k)\{f_1^{8k}\}$ and
$g_2G_2^\alpha(k)\{f_1^{8k}\}$,
we need the following modular equation of the second order \eqref{modular_equation_2nd_order}.

Set
\[S = S(q) = \frac{f_1^{24}}{q f_2^{24}}\quad \text{and} \quad T=T(q) = \frac{f_1^8}{qf_4^8}.\]
Then the modular equation of the second order is
\begin{align}\label{modular_equation_2nd_order}
S(q^2) = T^2 + 16 T.
\end{align}
This celebrated identity was discovered by Jacobi \cite[p.~470]{Whittaker-Watson-1965},
which also appears in \cite[p.~833]{Peter-2012} and \cite[p.~394]{Hirschhorn-book}.

The next lemma shows that $U_2\{T^n\}$ is a polynomial of $S$ or $S^{-1}$.

\begin{lem}\label{h2_yield_polynomial}
For any $n\ge 0$, $U_2\{T^n\}$ is an integer polynomial in $S$ with degree at most $\lfloor \frac{n}{2}\rfloor.$
Similarly, for $n<0$, $U_2\{T^n\}$ is an integer polynomial of $S^{-1}$ with degree at most $-n$.
\end{lem}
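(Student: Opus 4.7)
The plan is to derive a two-term linear recurrence for $P_n := U_2\{T^n\}$ from the modular equation \eqref{modular_equation_2nd_order}, and then verify the polynomial structure by induction. The central tool is the pull-out identity
\begin{equation*}
U_2\bigl\{F(q)\,G(q^2)\bigr\} \;=\; G(q)\cdot U_2\{F\}(q),
\end{equation*}
which follows at once from extracting the even-indexed coefficients of a Laurent series. Multiplying \eqref{modular_equation_2nd_order} through by $T^{n-2}$ gives $T^{n-2}\,S(q^2) = T^{n}+16\,T^{n-1}$, and applying $U_2$ yields the recurrence
\begin{equation*}
P_n \;=\; S\cdot P_{n-2} \;-\; 16\,P_{n-1},
\end{equation*}
valid for every integer $n$.

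Next I will pin down the base cases. Plainly $P_0 = U_2\{1\} = 1$. To obtain $P_1$ and $P_{-1}$ I will invoke the standard identity $U_2\{F\}(q^2) = \tfrac12\bigl(F(q)+F(-q)\bigr)$. Since $S(q^2)$ is invariant under $q\mapsto -q$, both $T(q)$ and $T(-q)$ are roots of the quadratic $x^2 + 16x - S(q^2) = 0$; they are distinct because their leading Laurent behaviours at $q=0$ are $q^{-1}$ and $-q^{-1}$. Vieta's formulas therefore give
\begin{equation*}
T(q)+T(-q) = -16,\qquad T(q)\,T(-q) = -S(q^2),
\end{equation*}
so $P_1 = -8$, and then $T(q)^{-1}+T(-q)^{-1} = 16/S(q^2)$ gives $P_{-1} = 8\,S^{-1}$.

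The polynomial claim now follows by induction. For $n\geq 2$, upward induction using $P_n = S\,P_{n-2} - 16\,P_{n-1}$ keeps $P_n$ in $\mathbb{Z}[S]$, and the degree bound is confirmed from $\deg_S(S\,P_{n-2}) \leq 1+\lfloor(n-2)/2\rfloor = \lfloor n/2\rfloor$ together with $\deg_S P_{n-1}\leq \lfloor(n-1)/2\rfloor \leq \lfloor n/2\rfloor$. For $n\leq -2$, rewrite the recurrence as $P_{n-2} = S^{-1}\bigl(P_n + 16\,P_{n-1}\bigr)$ and apply downward induction, which keeps $P_n$ in $\mathbb{Z}[S^{-1}]$ with $\deg_{S^{-1}}P_{n-2} \leq 1+\max(-n,\,1-n) = -(n-2)$.

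The one step I expect to demand genuine care is the evaluation of $P_{\pm 1}$: one must justify that $T(q)$ and $T(-q)$ really are the two roots of $x^2 + 16x - S(q^2)$, so that Vieta's relations can be used to bypass an explicit $q$-expansion. This rests only on \eqref{modular_equation_2nd_order} together with the $q\mapsto -q$ invariance of $S(q^2)$, but it is the one non-mechanical input; once those base values are in hand, the recurrence propagates both the integrality and the degree bounds automatically, in both directions.
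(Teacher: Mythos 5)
Your proof is correct, and its skeleton is the same as the paper's: both derive from \eqref{modular_equation_2nd_order} the two-term recurrence $U_2\{T^{n}\}=S\cdot U_2\{T^{n-2}\}-16\,U_2\{T^{n-1}\}$ (via the pull-out identity $U_2\{F(q)G(q^2)\}=G(q)U_2\{F\}$, which the paper uses implicitly) and then run an induction in each direction with the stated degree bookkeeping. Where you genuinely diverge is in the base cases. The paper computes $U_2\{T\}=-8$ by substituting Berndt's $2$-dissection of $f_1^4$ into $T=f_1^8/(qf_4^8)$, and merely asserts ("it can be shown that") the values $U_2\{T^{-1}\}=8S^{-1}$ and $U_2\{T^{-2}\}=128S^{-2}+S^{-1}$. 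You instead use $U_2\{F\}(q^2)=\tfrac12\bigl(F(q)+F(-q)\bigr)$ together with the observation that $T(q)$ and $T(-q)$ are the two distinct roots of $x^2+16x-S(q^2)$, so Vieta gives $T(q)+T(-q)=-16$ and $T(q)T(-q)=-S(q^2)$; this yields $U_2\{T\}=-8$ and $U_2\{T^{-1}\}=8S^{-1}$ in one stroke, with no appeal to an external dissection formula, and your single two-sided recurrence then needs only the three seeds $P_0,P_1,P_{-1}$ (the paper's $U_2\{T^{-2}\}$ is in any case a consequence of the recurrence). This is a self-contained and arguably cleaner route to the initial values, at the cost of working in the field of formal Laurent series to justify Vieta. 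One cosmetic slip: in the downward induction you say "for $n\le -2$ rewrite the recurrence as $P_{n-2}=S^{-1}(P_n+16P_{n-1})$"; you actually need this for $n\le 0$ (starting at $n=0$ to produce $P_{-2}$ from $P_0$ and $P_{-1}$), but the argument as carried out is exactly that, so nothing is affected.
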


\begin{proof}
First, we prove the case of $n\ge 0$ by induction on $n$.
For $n = 0$,
we have done since $U_2\{T^0\} = 1$.
By the following $2$-dissection of $f_1^4$ \cite[p.~40,~Entry~25]{Berndt-1991}:
\begin{align*}
f_1^4 & = \frac{f_4^{10}}{f_2^2f_8^4} - 4q\frac{f_2^2f_8^4}{f_4^2},
\end{align*}
we obtain
\[T  = \frac{f_1^8}{q f_4^8} = \frac{1}{qf_4^8} \left(\frac{f_4^{20}}{f_2^4f_8^8} - 8q f_4^8 + 16 q^2 \frac{f_2^4f_8^8}{f_4^4}\right)
= -8 + \frac{f_4^{12}}{qf_2^4f_8^8} + 16q\frac{f_2^4f_8^8}{f_4^{12}}.
\]
It implies that $U_2\{T\} = -8$. Therefore, the lemma holds for $n=0$ and $1$.
Suppose that the lemma holds for the case of $0\le n \le \lambda$ $(\lambda\ge 1).$
By means of the modular equation of the second order \eqref{modular_equation_2nd_order},
we get that
\[U_2\{T^{\lambda+1}\} = U_2\{S(q^2) T^{\lambda-1}\} - 16 U_2\{T^\lambda\} = S\cdot U_2\{T^{\lambda-1}\} - 16 U_2\{T^\lambda\},\]
which means that $U_2\{T^{\lambda+1}\}$ is also an integer polynomial of $S$.
Because $\lfloor \frac{\lambda-1}{2}\rfloor + 1 = \lfloor \frac{\lambda+1}{2}\rfloor \ge \lfloor \frac{\lambda}{2}\rfloor$,
the degree of $U_2\{T^{\lambda+1}\}$ is at most $\lfloor \frac{\lambda+1}{2}\rfloor.$
This proves the case of $n \ge 0$.

From \eqref{modular_equation_2nd_order},
we see that
\begin{align}\label{negative-rec-T}
U_2\{T^{-\lambda-1}\} = S^{-1}U_2\{T^{-\lambda+1}\} + 16S^{-1}U_2\{T^{-\lambda}\}.
\end{align}
For $n<0$, it can be shown that
\[U_2\left\{T^{-1}\right\}=8S^{-1}\quad \text{and}\quad U_2\{T^{-2}\} =128 S^{-2}+ S^{-1},\]
which imply that the lemma holds for $n=-1$ and $-2$.
Suppose that the statement holds for $-\lambda \le n \le -1$ $(\lambda \ge 2)$.
It follows from \eqref{negative-rec-T} that
$U_2\{T^{-\lambda-1}\}$ is an integer polynomial of $S^{-1}$, and its degree is at most $\lambda+1$.
\end{proof}
For ease of calculation,
we list $U_2\{T^n\}$ for $-2\leq n\leq 3$ in Table \ref{U_2_value}.

\begin{table}[htbp]
\footnotesize{
\centering
  \begin{tabular}{|p{1.6cm}<{\centering}|p{1.8cm}<{\centering}|
  p{1.8cm}<{\centering}|p{1.8cm}<{\centering}|p{1.8cm}<{\centering}|
  p{1.8cm}<{\centering}|p{1.8cm}<{\centering}|}
  \hline
  $n$ & $-2$ &$-1$& $0$& $1$ &$2$& $3$\\
  \hline
  $U_2\{T^{n}\}$ &{\scriptsize $128S^{-2}+S^{-1}$} &${8S^{-1}}$ &$1$ &$-8$ &$S+128$ &{\scriptsize $-24S-2048$}\\
  \hline
\end{tabular}
\caption{{$U_2\{T^{n}\}$ for $-2\leq n\leq 3.$}}
\label{U_2_value}}
\end{table}

For the sake of brevity,
we abbreviate $G_2(k)\{f_1^{8k}P(S)\}$ (resp. $g_2(k)\{f_1^{8k}P(S)\}$)
as $G_2\{f_1^{8k}P(S)\}$ (resp. $g_2\{f_1^{8k}P(S)\}$) if no confusions arise,
where $P(S)$ is a power series of $S$.

\begin{lem}\label{positivity-2}
For any $\alpha\ge 0$ and $1\le k\le 3$,
$f_2^{-8k}g_2G_2^{\alpha}\{f_1^{8k}\}$ is an integer polynomial of $S$
with degree $\le \lfloor\frac{k}{2}\rfloor$,
and
$f_1^{-8k}G_2^{\alpha}\{f_1^{8k}\}$ is an
integer polynomial of $S^{-1}$ with degree $\le \lfloor\frac{k}{2}\rfloor$.
\end{lem}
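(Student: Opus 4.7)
The plan is to prove both assertions simultaneously by induction on $\alpha$. Writing $F_\alpha := G_2^\alpha\{f_1^{8k}\}$, I will maintain the stronger invariants $F_\alpha = f_1^{8k} P_\alpha(S^{-1})$ and $g_2\{F_\alpha\} = f_2^{8k} Q_\alpha(S)$, with $P_\alpha,Q_\alpha \in \mathbb{Z}[x]$ of degree at most $\lfloor k/2 \rfloor$; these are exactly the two conclusions of the lemma.

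For the base case $\alpha = 0$ the first invariant is immediate ($P_0 = 1$). For the second, I rewrite $f_1^{8k} = q^k f_4^{8k} T^k$ from the definition of $T$, and apply the identity $g_2(k)\{q^k h(q)\} = U_2\{h(q)\}$ together with the fact that $f_4^{8k}$ is a function of $q^2$ and so passes through $U_2$ as $f_2^{8k}$. This gives $f_2^{-8k} g_2\{f_1^{8k}\} = U_2\{T^k\}$, which Lemma \ref{h2_yield_polynomial} identifies as a polynomial in $S$ of degree $\le \lfloor k/2 \rfloor$.

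For the inductive step, substituting $S^{-1} = qf_2^{24}/f_1^{24}$ and $f_1^8 = qf_4^8 T$ into $F_\alpha = f_1^{8k}\sum_j c_j S^{-j}$ rewrites each summand as $c_j q^{k-2j} f_4^{8k-24j} f_2^{24j} T^{k-3j}$. Factoring $q^k$ out and applying $g_2(k)$, then pulling the $q^2$-factors through $U_2$ (with $f_4 \mapsto f_2$ and $f_2 \mapsto f_1$) and using $f_1^{24}/f_2^{24} = qS$, I expect to arrive at the clean formula
\[
f_2^{-8k}\, g_2\{F_\alpha\} \;=\; \sum_{j}\, c_j\, S^j\, U_2\{T^{k-3j}\}.
\]
Lemma \ref{h2_yield_polynomial} supplies the polynomial structure of each $U_2\{T^{k-3j}\}$; a short arithmetic check (using $j + \lfloor (k-3j)/2 \rfloor = \lfloor (k-j)/2 \rfloor \le \lfloor k/2 \rfloor$ when $k - 3j \ge 0$, and an analogous bound when $k - 3j < 0$) confirms $\deg Q_\alpha \le \lfloor k/2 \rfloor$. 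Since $f_2^{8k}$ is a function of $q^2$, applying $U_2$ pulls it out as $f_1^{8k}$, giving $F_{\alpha+1} = f_1^{8k}\, U_2\{Q_\alpha(S)\}$, so only $U_2\{Q_\alpha(S)\}$ remains to be controlled.

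The main obstacle is this last step, which reduces to computing $U_2\{S^j\}$. The key identity, $S(q)\cdot S(q^2) = T^3$, follows from $f_1^{24} = q^3 f_4^{24} T^3$ combined with $f_2^{24} = q^2 f_4^{24} S(q^2)$, and rewrites $S(q) = T^3/S(q^2)$; pushing $U_2$ through yields $U_2\{S\} = U_2\{T^3\}/S = -24 - 2048\, S^{-1}$ by Table \ref{U_2_value}, precisely a polynomial in $S^{-1}$ of degree $1$. Together with $U_2\{1\} = 1$, this covers all values $j \in \{0,1\}$ needed for $1 \le k \le 3$. The restriction $k \le 3$ is essential: for $k \ge 4$ one would require $U_2\{S^j\}$ with $j \ge 2$, which introduces \emph{positive} powers of $S$ and breaks the polynomial-in-$S^{-1}$ structure; it is precisely the small-$k$ regime that makes the induction close.
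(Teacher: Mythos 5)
Your proposal is correct and follows essentially the same route as the paper: the same two-invariant induction on $\alpha$, the same base case $f_2^{-8k}g_2\{f_1^{8k}\}=U_2\{T^k\}$, the identical key formula $f_2^{-8k}g_2\{F_\alpha\}=\sum_j c_j S^{j}U_2\{T^{k-3j}\}$ with the same degree count, all resting on Lemma \ref{h2_yield_polynomial} and Table \ref{U_2_value}. The only cosmetic difference is the $U_2$-step, where you pull $f_2^{8k}$ through $U_2$ and evaluate $U_2\{S\}$ via $S(q)S(q^2)=T^3$; this is exactly the computation the paper packages into the identity $U_2\{f_2^{8k}S^{i}\}=f_1^{8k}S^{-i}U_2\{T^{3i}\}$ in \eqref{U_2_action}, so the two arguments coincide.
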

\begin{proof}
We prove this lemma by induction on $\alpha$.
For $\alpha=0$, we see that
\[g_2\{f_1^{8k}\} = g_2\{q^kf_4^{8k}T^k\}= f_2^{8k}U_2\{T^k\}.\]
This, together with Lemma \ref{h2_yield_polynomial},
yields that $f_2^{-8k}g_2\{f_1^{8k}\}$ is a polynomial of $S$ with integral coefficients,
and its degree is at most $\lfloor\frac{k}{2}\rfloor.$
Suppose that this lemma holds for $\alpha$,
then we set
\begin{align}\label{IU_U_2G-2}
g_2G_2^{\alpha}\{f_1^{8k}\} = f_2^{8k}\sum\limits_{i=0}^{\lfloor {k}/{2}\rfloor} a_iS^i,
\end{align}
where $a_i$ $(0\le i \le \lfloor\frac{k}{2}\rfloor)$ are integers.
Applying $U_2$ to both sides of \eqref{IU_U_2G-2},
we obtain
\begin{align}\label{G2_action_ps-1}
G_2^{\alpha+1}\{f_1^{8k}\}
= U_2g_2 G_2^{\alpha}\{f_1^{8k}\}
= U_2\left\{f_2^{8k}\sum_{i=0}^{\lfloor {k}/{2}\rfloor} a_iS^i\right\}.
\end{align}
By the definition of $U_2$,
we have for any integer $i$,
\begin{align}\label{U_2_action}
U_2\{f_2^{8k}S^i\}
 = U_2\{q^{2i}f_2^{8k-24i}f_4^{24i}T^{3i}\}
 = q^if_1^{8k-24i}f_2^{24i}U_2\{T^{3i}\}
 = f_1^{8k} S^{-i}U_2\{T^{3i}\}.
\end{align}
Combining \eqref{U_2_action} with \eqref{G2_action_ps-1},
we arrive at
\begin{align*}
  G_2^{\alpha+1}\{f_1^{8k}\}
  = f_1^{8k}\sum\limits_{i=0}^{\lfloor {k}/{2}\rfloor} a_iS^{-i}U_2\{T^{3i}\}.
\end{align*}
For $1\leq k\leq 3$,
it is immediate from Lemma \ref{h2_yield_polynomial}
that $S^{-i}U_2\{T^{3i}\}$ is an integer polynomial of $S^{-1}$ with  degree  at most $\lfloor \frac{k}{2} \rfloor$ if $0\le i \le \lfloor\frac{k}{2}\rfloor$.
So we see that
$f_1^{-8k}G_2^{\alpha+1}\{f_1^{8k}\}$ is an integer polynomial of $S^{-1}$ with degree at most
$\lfloor\frac{k}{2}\rfloor$. Since $f_1^{-8k}G_2^{0}\{f_1^{8k}\}=1$, the second statement of this lemma holds.
Thus, we assume that
\begin{equation}\label{assume_1}
 G_2^{\alpha+1}\{f_1^{8k}\} = f_1^{8k}\sum\limits_{i=0}^{\lfloor{k}/{2}\rfloor}b_iS^{-i},
\end{equation}
where
$b_i$ $(0\le i \le \lfloor\frac{k}{2}\rfloor)$
are integers.
Since
\begin{align}\label{g_2_action}
g_2\{f_1^{8k}S^{i}\} = g_2\{q^{-i}f_1^{8k+24 i}f_2^{-24i}\}
 = q^{i}f_1^{-24i}f_2^{8k+24i}U_2\{T^{k+3i}\}
 = f_2^{8k}S^{-i}U_2\{T^{k+3i}\}
\end{align}
holds for any integer $i$,
it follows from \eqref{assume_1} that
\begin{align*}
g_2G_2^{\alpha+1}\{f_1^{8k}\} = f_2^{8k}\sum\limits_{i=0}^{\lfloor{k}/{2}\rfloor}b_iS^iU_2\{T^{k-3i}\}.
\end{align*}
Note that $k-3i<0$ may hold only if $i=\lfloor\frac{k}{2}\rfloor$.
In this case, from Lemma \ref{h2_yield_polynomial},
it can be seen that the degree of $U_2\{T^{k-3i}\}$,
as an integer polynomial of $S^{-1}$, is at most $3i-k = 3\lfloor\frac{k}{2}\rfloor-k \le \lfloor \frac{k}{2}\rfloor = i.$
So $S^iU_2\{T^{k-3i}\}$ is an integer polynomial of $S$ with degree $\le \lfloor\frac{k}{2}\rfloor$.
If $k-3i \ge 0$, then the power of $S$ in $S^iU_2\{T^{k-3i}\}$ is at most $i+\lfloor\frac{k-3i}{2}\rfloor = \lfloor\frac{k-i}{2}\rfloor \le \frac{k}{2}.$
Therefore, $f_2^{-8k}g_2G_2^{\alpha+1}\{f_1^{8k}\}=\sum\limits_{i=0}^{\lfloor{k}/{2}\rfloor}b_iS^iU_2\{T^{k-3i}\}$ is an integer polynomial of $S$,
and its degree is at most $\lfloor\frac{k}{2}\rfloor$.
So the first statement is true. This completes the proof.
\end{proof}

We are ready to prove Theorem \ref{a8k_GF} based on Lemma \ref{positivity-2}.

\begin{proof}[Proof of Theorem \ref{a8k_GF}]
  We illustrate our method of proving Theorem \ref{a8k_GF} with an example of the case $k=2$.
  By Lemma \ref{positivity-2},
  we assume that for $\alpha\geq 0$
  \begin{align}\label{G_2_assumption}
    f_1^{-16} G_2^\alpha\left\{f_1^{16}\right\}
    =A_2'(2\alpha)+B_2'(2\alpha)S^{-1},
  \end{align}
  and
    \begin{align}\label{g_2G_2_assumption}
    f_2^{-16} g_2G_2^\alpha\left\{f_1^{16}\right\}
    =A_2'(2\alpha+1)+B_2'(2\alpha+1)S.
  \end{align}

  It follows from \eqref{G_2_assumption} that
  \begin{align*}
    g_2 G_2^\alpha\left\{f_1^{16}\right\}
    =A_2'(2\alpha) g_2\{f_1^{16}\}+B_2'(2\alpha) g_2\{f_1^{16}S^{-1}\}.
  \end{align*}
  By \eqref{g_2_action}, we obtain
  \begin{align*}
    g_2 G_2^\alpha\left\{f_1^{16}\right\}
    =A_2'(2\alpha)f_2^{16} U_2\{T^2\} + B_2'(2\alpha) f_2^{16} S\cdot U_2\{T^{-1}\}.
  \end{align*}
  Combining this with the expressions of $U_2\{T^2\}$ and $U_2\{T^{-1}\}$ in Table \ref{U_2_value},
  we have
  \begin{align}\label{Thm-1.1-pf-eq1}
    f_2^{-16}g_2 G_2^\alpha\left\{f_1^{16}\right\}
    =(128 A_2'(2\alpha)+8B_2'(2\alpha)) + A_2'(2\alpha)S.
  \end{align}
  Comparing \eqref{Thm-1.1-pf-eq1} with \eqref{g_2G_2_assumption}
  implies the recurrence relations
  \begin{equation}
    \begin{cases}\label{2-even-odd}
    A_2'(2\alpha+1)&=128A_2'(2\alpha)+8B_2'(2\alpha),\\[5pt]
    B_2'(2\alpha+1)&=A_2'(2\alpha).
    \end{cases}
\end{equation}
  On the other hand, multiplying $f_2^{16}$ on both sides
  of \eqref{g_2G_2_assumption} and then applying $U_2$, we obtain
     \begin{align}\label{Thm-1.1-pf-eq2}
     G_2^{\alpha+1}\left\{f_1^{16}\right\} & = A_2'(2\alpha+1) U_2\{f_2^{16}\} + B_2'(2\alpha+1) U_2\{f_2^{16} S\}.
     \end{align}
  In light of \eqref{U_2_action},
  the equality \eqref{Thm-1.1-pf-eq2} yields
  \begin{align}\label{Thm-1.1-pf-eq3}
  G_2^{\alpha+1}\left\{f_1^{16}\right\}
   = A_2'(2\alpha+1) f_1^{16} + B_2'(2\alpha+1) f_1^{16} S^{-1} U_2\{T^3\}.
  \end{align}
  Substituting the expression of $U_2\{T^3\}$ (see Table \ref{U_2_value})
  into \eqref{Thm-1.1-pf-eq3},
  we get
  \begin{align}\label{Thm-1.1-pf-eq4}
    f_1^{-16}G_2^{\alpha+1}\left\{f_1^{16}\right\}
    =\left(A_2'(2\alpha+1) -24 B_2'(2\alpha+1)\right)-2048 B_2'(2\alpha+1)S^{-1}.
  \end{align}
  Comparing \eqref{Thm-1.1-pf-eq4} with \eqref{G_2_assumption},
  we see that
  \begin{equation}
    \begin{cases}\label{2-odd-even}
    A_2'(2\alpha+2)&=A_2'(2\alpha+1)-24B_2'(2\alpha+1),\\[5pt]
    B_2'(2\alpha+2)&=-2048B_2'(2\alpha+1).
    \end{cases}
  \end{equation}
  It is obvious that $A_2'(0)=1$ and $B_2'(0)=0$.
  Hence,  $A_2'(\alpha)$ and $B_2'(\alpha)$ are determined by the recurrence relations \eqref{2-even-odd} and \eqref{2-odd-even}
  and the initial values $A_2'(0)=1$ and $B_2'(0)=0$.
  It is easy to check that $A_2'(\alpha)$ (resp. $B_2'(\alpha)$)
   and $A_2(\alpha)$ (resp. $B_2(\alpha)$) satisfy the same recurrence relation
  and share the same initial values. Thus, $A_2(\alpha)=A_2'(\alpha)$ and $B_2(\alpha)=B_2'(\alpha)$.
  This completes the proof.
\end{proof}

To study $G_3^\beta(k)\{f_1^{3k}\}$ and
$g_3G_3^\beta(k)\{f_1^{3k}\}$ $(1\le k \le 8)$,
we require the following modular equation of the third order \eqref{modular_equation_3rd_order}.

Define
\begin{align*}
  X = X(q) = \frac{f_1^{12}}{qf_3^{12}}\quad \text{and}\quad Y = Y(q) =\frac{f_1^3}{qf_9^3}.
\end{align*}
Then the modular equation of the third order is of the form
\begin{align}\label{modular_equation_3rd_order}
X(q^3) = Y^3 + 9 Y^2 + 27 Y.
\end{align}
For more details, please refer to
\cite[Theorem 9.1]{Cooper-2017} and \cite[p.~397]{Hirschhorn-book}.

Now we proceed to derive some properties related to the modular equation of the third order.

\begin{lem}\label{h3_yield_polynomial}
For any $n\ge 0$,
$U_3\{Y^n\}$ is an integer polynomial in $X$ with degree at most $\lfloor \frac{n}{3}\rfloor.$
Similarly, for $n<0$,
$U_3\{Y^n\}$ is an integer polynomial of $X^{-1}$ with degree at most $-n$.
\end{lem}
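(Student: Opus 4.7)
The plan is to mimic the structure of the proof of Lemma~\ref{h2_yield_polynomial} in the third-order setting, using the modular equation \eqref{modular_equation_3rd_order}, the ``pull-out'' identity $U_3\{A(q^3)B(q)\} = A(q)\,U_3\{B(q)\}$, and explicit evaluations of a handful of base cases via 3-dissection.

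First I would extract two recurrences from \eqref{modular_equation_3rd_order}. Rewriting it as $Y^3 = X(q^3) - 9Y^2 - 27Y$, multiplying by $Y^n$, and applying $U_3$ yields
\begin{equation*}
U_3\{Y^{n+3}\} = X\cdot U_3\{Y^n\} - 9\,U_3\{Y^{n+2}\} - 27\,U_3\{Y^{n+1}\}.
\end{equation*}
Rewriting the same equation as $Y^{-1} = X(q^3)^{-1}(Y^2 + 9Y + 27)$ and multiplying by $Y^{-n}$ gives the companion
\begin{equation*}
U_3\{Y^{-(n+1)}\} = X^{-1}\bigl(U_3\{Y^{-n+2}\} + 9\,U_3\{Y^{-n+1}\} + 27\,U_3\{Y^{-n}\}\bigr).
\end{equation*}
In the forward direction each application advances the exponent by $3$ while introducing at most one new factor of $X$, matching the bound $\lfloor n/3\rfloor$; in the backward direction each application contributes exactly one factor of $X^{-1}$, matching the bound $-n$.

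Next I would establish the base cases: on the positive side the three starters $U_3\{Y^0\}$, $U_3\{Y\}$, $U_3\{Y^2\}$ must each be an integer constant (since $\lfloor n/3\rfloor = 0$ for $n = 0, 1, 2$), while on the negative side $U_3\{Y^{-1}\}$, $U_3\{Y^{-2}\}$, $U_3\{Y^{-3}\}$ must be integer polynomials in $X^{-1}$ of degrees at most $1, 2, 3$. Trivially $U_3\{Y^0\} = 1$. For $U_3\{Y\}$, writing $Y = q^{-1}f_1^3/f_3^3(q^3)$ via $f_9(q) = f_3(q^3)$, pulling $f_3^{-3}(q^3)$ through $U_3$, and applying Jacobi's identity $f_1^3 = \sum_{k\ge 0}(-1)^k(2k+1)q^{k(k+1)/2}$: the $k \equiv 1 \pmod 3$ subsum collapses into a multiple of $f_1^3(q^9) = f_3^3(q^3)$ and produces $U_3\{Y\} = -3$. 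The remaining seeds are obtained by parallel 3-dissection arguments on the appropriate powers or reciprocals of $f_1^3$. A straightforward induction in each direction using the two recurrences then propagates the claimed degree bounds.

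The main obstacle will be the base-case computation, specifically checking that $U_3\{Y^2\}$ really lies in $\mathbb{Z}$ and that each $U_3\{Y^{-j}\}$ for $j = 1, 2, 3$ really lies in $\mathbb{Z}[X^{-1}]$ with the sharp degree claimed. Unlike $U_3\{Y\}$, whose evaluation folds neatly onto Jacobi's identity, these require 3-dissections of higher powers or reciprocals of $f_1^3$, together with a careful check that no spurious higher power of $X$ or $X^{-1}$ appears in the resulting expression. Once the seeds are in hand, everything else follows the template of Lemma~\ref{h2_yield_polynomial} essentially verbatim.
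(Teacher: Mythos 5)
Your outline is essentially the paper's own proof: the same two recurrences extracted from \eqref{modular_equation_3rd_order} (the paper's \eqref{negative-rec-U} for the backward direction), the same two inductions, and the same degree bookkeeping. The one point where you go astray is the seeds, which you flag as ``the main obstacle'': the negative base cases require no $3$-dissection of reciprocals of $f_1^3$ at all. Taking $\lambda=0,1,2$ in your backward recurrence and feeding in the nonnegative values $U_3\{Y^0\}=1$, $U_3\{Y\}=-3$, $U_3\{Y^2\}=9$ gives at once $U_3\{Y^{-1}\}=9X^{-1}$, $U_3\{Y^{-2}\}=6X^{-1}+243X^{-2}$ and $U_3\{Y^{-3}\}=X^{-1}+243X^{-2}+6561X^{-3}$, which is exactly how the paper obtains them (cf.\ Table \ref{U_3_value}); so the negative induction can be seeded without any new $q$-series work. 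The only genuinely independent input beyond $U_3\{Y^0\}=1$ and your Jacobi-triple-product computation of $U_3\{Y\}=-3$ is $U_3\{Y^2\}=9$, which cannot come from the recurrence (it needs three consecutive seeds): the paper reads all three nonnegative seeds off Hirschhorn's $3$-dissection identity (43.3.14), and in your setup you would need the corresponding dissection of $q^{-2}f_1^6$ modulo $3$-powers of the exponent, e.g.\ by squaring the $3$-dissection of $f_1^3$. With that single finite verification supplied, your induction closes and the argument coincides with the paper's.
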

\begin{proof}
We first show the statements of the case of $n\geq0$ by induction on $n$.
Due to Hirschhorn \cite[p.~397,~ (43.3.14)]{Hirschhorn-book},
it is easy to deduce that
\begin{align}\label{U_3-first_values}
  U_3\{Y^0\} = 1,\qquad U_3\{Y\}=-3\qquad\text{ and }\qquad U_3\{Y^2\}=9.
\end{align}
Therefore, the lemma holds for $n=0$, $1$ and $2$.
Suppose that the lemma holds for $0\le n \le \lambda$ $(\lambda\ge 2)$.
By means of the modular equation of the third order \eqref{modular_equation_3rd_order},
we get that
\[U_3\{Y^{\lambda+1}\} = X\cdot U_3\{Y^{\lambda-2}\} - 27 U_3\{Y^{\lambda-1}\} - 9U_3\{Y^\lambda\},\]
which means that $U_3\{Y^{\lambda+1}\}$ is also an integer polynomial of $X$.
Because $\lfloor \frac{\lambda-2}{3}\rfloor + 1 = \lfloor \frac{\lambda+1}{3}\rfloor \ge \lfloor \frac{\lambda}{3}\rfloor \ge \lfloor \frac{\lambda-1}{3}\rfloor$,
the degree of $U_3\{Y^{\lambda+1}\}$ is at most $\lfloor \frac{\lambda+1}{3}\rfloor.$
This proves the case of $n \ge 0$.

From \eqref{modular_equation_3rd_order},
we derive that
\begin{align}\label{negative-rec-U}
U_3\{Y^{-\lambda-1}\} =X^{-1}U_3\{Y^{-\lambda+2}\} +9 X^{-1}U_3\{Y^{-\lambda+1}\}+ 27X^{-1}U_3\{Y^{-\lambda}\}.
\end{align}
For $n < 0$, combining \eqref{negative-rec-U} with \eqref{U_3-first_values},
we have
\begin{align*}
  U_3\{Y^{-1}\}&=9X^{-1},\\[5pt]
  U_3\{Y^{-2}\} &= 6X^{-1}+243X^{-2},\\[5pt]
  U_3\{Y^{-3}\}&=X^{-1}+234X^{-2}+6561X^{-3},
\end{align*}
which show that the lemma holds for $n=-1$, $-2$ and $-3$.
Suppose that the statement holds for $-\lambda \le n \le -1$ $(\lambda \ge 3)$.
From \eqref{negative-rec-U},
we derive that
$U_3\{Y^{-\lambda-1}\}$ is also an integer polynomial of $X^{-1}$,
and its degree is at most $\lambda+1$.
This completes the proof for $n<0$.
\end{proof}
For ease of calculation,
we list $U_3\{Y^n\}$ for $-3\leq n\leq 8$ in Table \ref{U_3_value}.

\begin{table}[hbtp]
\scriptsize\begin{tabular}{|p{1cm}<{\centering}|c|
  p{1cm}<{\centering}|c|}
  \hline
   $n$& $U_3\{Y^{n}\}$ & $n$& $U_3\{Y^{n}\}$\\
  \hline
  $-3$&$6561X^{-3}+243X^{-2}+X^{-1}$&
  $3$&$X$\\
  \hline
  $-2$&${243}{X^{-2}}+6{X^{-1}}$ & $4$&$-12 X-243$    \\
  \hline
  $-1$&$9{X^{-1}}$ & $5$&$90 X+2187$    \\
  \hline
  $0$&$1$ & $6$&$X^2-486 X-13122$ \\
  \hline
  $1$&$-3$ & $7$&$-21 X^2+1701 X+59049$ \\
  \hline
  $2$&$9$&$8$&$252 X^2-177147$\\
  \hline
\end{tabular}
\caption{$U_3\{Y^{n}\}$ for $-3\leq n\leq 8.$}
\label{U_3_value}
\end{table}
For the sake of convenience, we abbreviate $G_3(k)\{f_1^{3k}P(X)\}$ (resp. $g_3(k)\{f_1^{3k}P(X)\}$)
as $G_3\{f_1^{3k}P(X)\}$ (resp. $g_3\{f_1^{3k}P(X)\}$) if no confusions arise,
where $P(X)$ is a power series of $X$.

We also obtain the following equalities
similar to  \eqref{U_2_action} and \eqref{g_2_action},
for any integer $i$,
\begin{align}\label{U_3_action}
  U_3\{f_3^{3k}X^i\}
= U_3\{q^{-i}f_1^{12i}f_3^{3k-12i}\}
 = q^if_1^{3k-12i}f_3^{12i}U_3\{Y^{4i}\}
 = f_1^{3k} X^{-i}U_3\{Y^{4i}\},
\end{align}
and
\begin{align}\label{g_3_action}
  g_3\{f_1^{3k}X^{i}\}
= g_3\{q^{-i}f_1^{3k+12 i}f_3^{-12i}\}
= q^{i}f_1^{-12i}f_3^{3k+12i}U_3\{Y^{k+4i}\}
= f_3^{3k}X^{-i}U_3\{Y^{k+4i}\}.
\end{align}
Based on the above two equalities,
we can derive the following statements parallel to Lemma \ref{positivity-2}.

\begin{lem}\label{positivity-3}
For any $\beta\ge 0$ and $1\le k\le 8$,
$f_3^{-3k}g_3G_3^{\beta}\{f_1^{3k}\}$ is an integer polynomial of $X$ with degree $\le \lfloor\frac{k}{3}\rfloor$, and
 $f_1^{-3k}G_3^{\beta}\{f_1^{3k}\}$ is an integer polynomial of $X^{-1}$ with degree $\le \lfloor\frac{k}{3}\rfloor$.
\end{lem}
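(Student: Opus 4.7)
The plan is to prove Lemma \ref{positivity-3} by induction on $\beta$, following exactly the blueprint of Lemma \ref{positivity-2}, but replacing the $2$-adic identities by their $3$-adic counterparts \eqref{U_3_action} and \eqref{g_3_action} together with Lemma \ref{h3_yield_polynomial}. The two statements will be established in tandem: the polynomiality of $f_3^{-3k}g_3G_3^{\beta}\{f_1^{3k}\}$ in $X$ and of $f_1^{-3k}G_3^{\beta}\{f_1^{3k}\}$ in $X^{-1}$ feed into each other at successive steps.

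For the base case $\beta=0$, the second assertion is trivial since $f_1^{-3k}G_3^{0}\{f_1^{3k}\}=1$. For the first assertion, I would write $f_1^{3k}=q^{k}f_9^{3k}Y^{k}$ and apply $g_3$, giving
\[
g_3\{f_1^{3k}\}=g_3\{q^{k}f_9^{3k}Y^{k}\}=f_3^{3k}U_3\{Y^{k}\},
\]
which is $f_3^{3k}$ times an integer polynomial in $X$ of degree at most $\lfloor k/3\rfloor$ by Lemma \ref{h3_yield_polynomial}. For the inductive step, assuming $g_3G_3^{\beta}\{f_1^{3k}\}=f_3^{3k}\sum_{i=0}^{\lfloor k/3\rfloor}a_iX^i$, I would apply $U_3$ termwise using \eqref{U_3_action} to get
\[
G_3^{\beta+1}\{f_1^{3k}\}=f_1^{3k}\sum_{i=0}^{\lfloor k/3\rfloor}a_iX^{-i}U_3\{Y^{4i}\}.
\]
Symmetrically, once $G_3^{\beta+1}\{f_1^{3k}\}=f_1^{3k}\sum_{i=0}^{\lfloor k/3\rfloor}b_iX^{-i}$ is known, applying \eqref{g_3_action} yields
\[
g_3G_3^{\beta+1}\{f_1^{3k}\}=f_3^{3k}\sum_{i=0}^{\lfloor k/3\rfloor}b_iX^{i}U_3\{Y^{k-4i}\}.
\]

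The main obstacle, and the only place where the hypothesis $1\le k\le 8$ is used, is the bookkeeping of the degrees. For $X^{-i}U_3\{Y^{4i}\}$, Lemma \ref{h3_yield_polynomial} gives $\deg_X U_3\{Y^{4i}\}\le\lfloor 4i/3\rfloor$, and a short case-check for $i=0,1,2$ shows $\lfloor 4i/3\rfloor\le i$, so the product is a polynomial in $X^{-1}$ of degree at most $i\le\lfloor k/3\rfloor$. For $X^{i}U_3\{Y^{k-4i}\}$, two subcases appear. When $k-4i\ge 0$, the $X$-degree is bounded by $i+\lfloor(k-4i)/3\rfloor\le\lfloor(k-i)/3\rfloor\le\lfloor k/3\rfloor$. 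When $k-4i<0$, Lemma \ref{h3_yield_polynomial} says $U_3\{Y^{k-4i}\}$ is a polynomial in $X^{-1}$ of degree at most $4i-k$, and the lowest $X$-power in $X^{i}U_3\{Y^{k-4i}\}$ is $k-3i\ge 0$ (which holds precisely because $i\le\lfloor k/3\rfloor$), while the highest is $i\le\lfloor k/3\rfloor$. Thus in every case an honest polynomial of the prescribed type is produced.

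Since all occurring $(k,i)$ pairs lie in the finite table corresponding to $1\le k\le 8$ and $0\le i\le\lfloor k/3\rfloor$, the degree estimates above can be verified term by term against Table \ref{U_3_value} as a sanity check. Integrality of coefficients is automatic because each $a_i,b_i$ is an integer by the induction hypothesis and every entry of Table \ref{U_3_value} is an integer polynomial in $X^{\pm 1}$. This closes the induction and completes the proof of both assertions simultaneously.
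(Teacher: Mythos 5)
Your proof is correct and is exactly the argument the paper intends: the paper omits the proof of Lemma \ref{positivity-3}, stating it is analogous to Lemma \ref{positivity-2}, and your tandem induction via \eqref{U_3_action}, \eqref{g_3_action} and Lemma \ref{h3_yield_polynomial}, with the degree bookkeeping $\lfloor 4i/3\rfloor\le i$ for $i\le 2$ and $k-3i\ge 0$ for $i\le\lfloor k/3\rfloor$, is precisely that analogue carried out in detail.
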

\begin{proof}
  The proof is analogous to the proof of Lemma \ref{positivity-2},
  and hence is omitted.
\end{proof}

We prove Theorem \ref{a3k_GF} with the aid of Lemma \ref{positivity-3}.

\begin{proof}[Proof of Theorem \ref{a3k_GF}]
Since there are eight cases should be considered for $1\leq k \leq 8$,
we only prove this theorem for $k=7$ without loss of generality.
Other cases can be derived similarly.
By Lemma \ref{positivity-3}, we assume that for any $\beta\ge 0$,
\begin{align}\label{G3_assumption}
f_1^{-21} G_3^{\beta}\{f_1^{21}\} = C_7'(2\beta) + D'_{7,1}(2\beta)X^{-1} + D'_{7,2}(2\beta)X^{-2},
\end{align}
and
\begin{align}\label{g3G3_assumption}
f_3^{-21} g_3G_3^{\beta}\{f_1^{21}\} = C_7'(2\beta+1) + D'_{7,1}(2\beta+1)X + D'_{7,2}(2\beta+1)X^{2}.
\end{align}
Multiplying $f_1^{21}$ on both sides of   \eqref{G3_assumption} and applying $g_3$, we have
\begin{align*}
g_3G_3^{\beta}\{f_1^{21}\} =C_7'(2\beta) g_3\{f_1^{21}\} + D'_{7,1}(2\beta)g_3\{f_1^{21}X^{-1}\}+D'_{7,2}(2\beta)g_3\{f_1^{21}X^{-2}\}.
\end{align*}
Combining this with \eqref{g_3_action}, we obtain that
\begin{align}\label{Thm-1.2-pf-eq1}
g_3G_3^{\beta}\{f_1^{21}\}=
C'_7(2\beta)f_3^{21}U_3\{Y^7\} + D'_{7,1}(2\beta)f_3^{21}XU_3\{Y^{3}\}+D'_{7,2}(2\beta)f_3^{21}X^2U_3\{Y^{-1}\}.
\end{align}
Substituting the expressions of $U_3\{Y^{-1}\}$, $U_3\{Y^3\}$ and $U_3\{Y^7\}$ (see Table \ref{U_3_value}) into \eqref{Thm-1.2-pf-eq1},
we get
\begin{align}
f_3^{-21}g_3G_3^{\beta}\{f_1^{21}\}
&=59049 C'_7(2\beta) + (1701C'_7(2\beta)+9D'_{7,2}(2\beta))X\nonumber \\[5pt] &\qquad+(-21C'_7(2\beta)+D'_{7,1}(2\beta))X^2.\label{Thm-1.2-pf-eq2}
\end{align}
Matching the coefficients of like powers of $X$ in \eqref{Thm-1.2-pf-eq2} and  \eqref{g3G3_assumption},
we obtain
\begin{equation}
\begin{cases}
\label{3-even-odd}
  C'_7(2\beta+1) &=59049C'_7(2\beta),\\[5pt]
  D'_{7,1}(2\beta+1)&=1701C'_7(2\beta)+9D'_{7,2}(2\beta),\\[5pt]
  D'_{7,2}(2\beta+1)&=-21 C'_7(2\beta)+D'_{7,1}(2\beta).
\end{cases}
\end{equation}
Meanwhile, the equality  \eqref{g3G3_assumption} implies that
\begin{align*}
G_3^{\beta+1}\left\{f_1^{21}\right\} = C'_7(2\beta+1)U_3\left\{f_3^{21}\right\}
+ D'_{7,1}(2\beta+1)U_3\{f_3^{21}X\}
+ D'_{7,1}(2\beta+1)U_3\{f_3^{21}X^2\}.
\end{align*}
By \eqref{U_3_action}, we have
\begin{align*}
G_3^{\beta+1}\left\{f_1^{21}\right\} = C'_7(2\beta+1)f_1^{21}
+ D'_{7,1}(2\beta+1)f_1^{21}X^{-1}U_3\{Y^4\}
+ D'_{7,2}(2\beta+1)f_1^{21}X^{-2}U_3\{Y^8\}.
\end{align*}
Due to the expressions of $U_3\{Y^4\}$ and $U_3\{Y^8\}$
(see Table \ref{U_3_value}), we arrive at
\begin{align*}
G_3^{\beta+1}\left\{f_1^{21}\right\} = &
\left(C'_7(2\beta+1)-12D'_{7,1}(2\beta+1)+252D'_{7,2}(2\beta+1)\right)f_1^{21}
\\[5pt]
&\qquad-243D'_{7,1}(2\beta+1)f_1^{21}X^{-1}-177147D'_{7,2}(2\beta+1)f_1^{21} X^{-2}.
\end{align*}
Combining this with \eqref{G3_assumption}, we see that
\begin{equation}
\begin{cases}
\label{3-odd-even}
  C'_7(2\beta+2)&=C'_7(2\beta+1)-12D'_{7,1}(2\beta+1)+252D'_{7,2}(2\beta+1),\\[5pt]
  D'_{7,1}(2\beta+2)&=-243D'_{7,1}(2\beta+1),\\[5pt]
  D'_{7,2}(2\beta+2)&=-177147D'_{7,2}(2\beta+1).
\end{cases}
\end{equation}
Clearly, $C'_7(0)=1$, $D'_{7,1}(0)=D'_{7,2}(0)=0$. Hence, $C'_7(\beta)$, $D'_{7,1}(\beta)$ and $D'_{7,2}(\beta)$ are uniquely determined by \eqref{3-even-odd}, \eqref{3-odd-even} and the initial values $C'_7(0)=1$, $D'_{7,1}(0)=D'_{7,2}(0)=0$.
By direct calculations,
we know that $C'_7(\beta)$, $D'_{7,1}(\beta)$
and $D'_{7,2}(\beta)$ enjoy the same recurrence relation
and share the same initial values as $C_7(\beta)$, $D_{7,1}(\beta)$
and $D_{7,2}(\beta)$, respectively. Thus,
for any $\beta\ge 0$, we have $C'_7(\beta)=C_7(\beta)$, $D'_{7,1}(\beta)=D_{7,1}(\beta)$ and $D'_{7,2}(\beta)=D_{7,2}(\beta)$.
This completes the proof.
\end{proof}

\section{Congruences for $p_{8k}(n)$ and $p_{3k}(n)$}\label{secofgenerating}

In this section, we first prove that for any $m\geq 2$,
$\mu_m(k)$ $(1\leq k \leq 3)$ and $\nu_m(k)$ $(1\leq k \leq 8)$ always exist.
Then we establish infinite families of congruences modulo $m$ for
$p_{8k}(n)$ $(1\le k \le 3)$ and $p_{3k}(n)$ $(1\le k \le 8)$ based on the
generating functions in Theorem \ref{a8k_GF} and Theorem \ref{a3k_GF}.

Recall that
\begin{align*}
  \mu_m(k)=\min\{\alpha\geq 1\mid B_k(2\alpha-1)\equiv 0\pmod m \},
\end{align*}
and
\begin{align*}
   \nu_m(k)=\min\left\{\beta\geq 1\mid D_{k,i}(2\beta-1)\equiv 0\pmod m\ \text{ for }\ 1 \leq i \leq \left\lfloor\frac{k}{3}\right\rfloor\right\},
\end{align*}
where
$B_k(\alpha)$ and $
D_{k,i}(\beta)$ are obtained in
Theorem \ref{a8k_GF} and Theorem \ref{a3k_GF}, respectively.

Now we point out that for any $m\geq 2$,
$\mu_m(k)$ $(1\leq k \leq 3)$ and $\nu_m(k)$ $(1\leq k \leq 8)$
must exist. In order to prove the existence, we need the definition of period of
 recurring sequences. For details, refer to \cite[Chapter 6]{Lidl-1986}.

Let $\{s_n\}_{n=0}^{\infty}$ be an integer sequence.
If there exist positive integers
$r $ and $n_0 $ such that
$s_{n+r} \equiv s_n \pmod m $ for all $n \geq n_0$,
then the sequence is called \emph{ultimately periodic for the modulus $m$} and $r$
is called a \emph{period} of the sequence.
The smallest number among all the
possible periods of an ultimately periodic sequence for the modulus $m$
is called the \emph{least period of the sequence}.
It is easy to prove that every period of an ultimately periodic sequence for the modulus $m$ is divisible by the least period.
An ultimately periodic sequence $\{s_n\}_{n=0}^{\infty}$ for the modulus $m$
with least period $r$ is called \emph{periodic} if $s_{n+r} \equiv s_n \pmod{m}$ holds for all  $n\geq 0$.

Let $d$ be a positive integer and $a_0,\ldots,a_{d-1} $ be given
integers with $a_0\neq 0$.
An integer sequence $s_0, s_1,\ldots$ is called a  $d$th-order \emph{linear recurring sequence} if it satisfies the relation
\begin{align*}
  s_{n+d}=a_{d-1}s_{n+d-1}+a_{d-2}s_{n+d-2}
  +\cdots+a_0s_n,\quad \text{for }n \geq 0
\end{align*}
with the initial values $s_0,s_1,\cdots, s_{d-1}$.

Engstrom \cite{Engstrom1931} studied the periodic property of linear recurring sequences for modulus $m$.
\begin{lem}{\rm\cite[p.~210]{Engstrom1931}}\label{Eng-lem}
  Let $s_0, s_1,\ldots$ be a $d$th-order linear recurring sequence satisfying
  \begin{align*}
  s_{n+d}=a_{d-1}s_{n+d-1}+a_{d-2}s_{n+d-2}
  +\cdots+a_0s_n,\quad \text{for }n \geq 0
  \end{align*}
  with $a_0\neq0$ and the initial values $s_0,s_1,\ldots, s_{d-1}$.
  If $m$ is prime to $a_0$, then the
  $d$th-order linear recurring sequence $s_0,s_1,\ldots$ is periodic;
  otherwise, it is ultimately periodic for the modulus $m$.
\end{lem}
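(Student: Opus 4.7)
The plan is to reduce the claim to elementary dynamics on the finite state space $(\mathbb{Z}/m\mathbb{Z})^d$. Given the $d$th-order linear recurrence, the sequence of \emph{state vectors}
\[v_n := (s_n, s_{n+1}, \ldots, s_{n+d-1}) \in (\mathbb{Z}/m\mathbb{Z})^d\]
evolves under a fixed shift map $T\colon (\mathbb{Z}/m\mathbb{Z})^d \to (\mathbb{Z}/m\mathbb{Z})^d$ defined by
\[T(x_0,\ldots,x_{d-1}) = (x_1,\ldots,x_{d-1},\, a_{d-1}x_{d-1}+\cdots+a_0 x_0).\]
Because the state space is finite (of size $m^d$), a pigeonhole argument on $v_0,v_1,\ldots,v_{m^d}$ produces indices $i<j\le m^d$ with $v_i\equiv v_j \pmod m$. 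Since $v_{n+1}$ is determined by $v_n$ via $T$, this forces $v_{i+\ell}\equiv v_{j+\ell}\pmod m$ for all $\ell\ge0$, hence $s_{n+r}\equiv s_n\pmod m$ for all $n\ge i$ with $r=j-i$. This establishes ultimate periodicity \emph{without any hypothesis on $\gcd(m,a_0)$}, proving the second half of the lemma.

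For the first half, I would show that the additional hypothesis $\gcd(m,a_0)=1$ makes the shift map $T$ a bijection of $(\mathbb{Z}/m\mathbb{Z})^d$, which upgrades ultimate periodicity to genuine periodicity (i.e., the recurrence is valid from $n=0$). The key observation is that the recurrence can be inverted one step backwards: if $(x_1,\ldots,x_{d-1},y)=T(x_0,x_1,\ldots,x_{d-1})$, then
\[x_0 = a_0^{-1}\bigl(y - a_{d-1}x_{d-1} - \cdots - a_1 x_1\bigr) \pmod m,\]
and this is well-defined precisely because $\gcd(a_0,m)=1$ makes $a_0$ a unit modulo $m$. Thus $T$ is a bijection on the finite set $(\mathbb{Z}/m\mathbb{Z})^d$, so every orbit is a pure cycle. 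In particular the orbit of $v_0$ is a cycle, meaning there exists $r\ge 1$ with $v_r\equiv v_0\pmod m$, which yields $s_{n+r}\equiv s_n\pmod m$ for \emph{all} $n\ge 0$.

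The main (and only nontrivial) obstacle is the invertibility half: one must notice that the forward shift involves $a_0$ nontrivially only in the inverse direction, so the $\gcd$ hypothesis enters precisely through the need to solve for the discarded coordinate. Once $T$ is identified as a bijection on a finite set, periodicity is automatic because bijections of finite sets are permutations, all of whose orbits are cycles. The minimality statement (every period is divisible by the least period) that is referenced in the paper's surrounding discussion then follows from a standard Euclidean-algorithm argument on the set of periods, viewed as an additive sub-semigroup of $\mathbb{N}$.
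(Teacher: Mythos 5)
Your argument is correct and complete. Note, however, that the paper itself gives no proof of this lemma at all: it is quoted verbatim as a known result of Engstrom, so the only comparison to make is with Engstrom's original treatment. Your route is the standard elementary one: encode the sequence by state vectors $v_n=(s_n,\ldots,s_{n+d-1})$ in the finite set $(\mathbb{Z}/m\mathbb{Z})^d$, get ultimate periodicity unconditionally by pigeonhole plus the fact that $v_{n+1}$ is a function of $v_n$, and observe that when $\gcd(a_0,m)=1$ the shift map (equivalently, the companion matrix, whose determinant is $\pm a_0$) is invertible modulo $m$, so every orbit is a pure cycle and the period is valid from $n=0$. All steps check out: the pigeonhole on $v_0,\ldots,v_{m^d}$ is counted correctly, the inversion formula $x_0=a_0^{-1}(y-a_{d-1}x_{d-1}-\cdots-a_1x_1)$ is exactly where the coprimality hypothesis enters, and the passage from ``bijection of a finite set'' to ``$v_r\equiv v_0$ for some $r\ge 1$'' is sound (pigeonhole plus injectivity to cancel). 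Engstrom's paper is aimed at much finer information --- the structure and length of the least period in terms of the characteristic polynomial modulo prime powers --- whereas your argument establishes only the qualitative existence statement actually used in this paper; for that purpose it is sufficient, self-contained, and arguably preferable to an external citation. Your closing remark about divisibility by the least period is indeed the standard sub-semigroup/division argument and matches the unproved assertion in the paper's surrounding discussion, though it is not part of the lemma itself.
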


We have proved that $\left\{B_k(2\alpha)\right\}_{\alpha=0}^\infty$
(resp. $\{D_{k,i}(2\beta)\}_{\beta=0}^\infty$) is a linear recurring sequence in Theorem \ref{a8k_GF} (resp. Theorem \ref{a3k_GF}).
Using the above result of Engstrom,
we prove that for arbitrary $m\geq2$,
$\mu_m(k)$ (resp. $\nu_m(k)$) defined by \eqref{def_mu} (resp. \eqref{def_nu}) always exists.

\begin{thm}\label{existence_of_mu_nu}
For the sequence $\{B_k(2\alpha-1)\}_{\alpha=1}^\infty$
(resp. $\{D_{k,i}(2\beta-1)\}_{\beta=1}^\infty$) obtained in Theorem \ref{a8k_GF} (resp. \ref{a3k_GF}),
the corresponding $\mu_m(k)$ (resp. $\nu_m(k)$)  always exists for any $m\geq 2$.
\end{thm}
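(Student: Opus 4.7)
The plan is to combine Engstrom's Lemma \ref{Eng-lem} with a backward-extension trick applied to the 2nd-order linear recurrence satisfied by the odd-indexed subsequence. By \eqref{2-recurring-relations}, the sequence $\{B_k(\alpha)\}$ obeys $B_k(\alpha+4)=f(k)B_k(\alpha+2)+g(k)B_k(\alpha)$; since this couples only indices of the same parity, the subsequence $b_\alpha:=B_k(2\alpha-1)$ for $\alpha\ge 1$ satisfies the 2nd-order recurrence $b_{\alpha+2}=f(k)b_{\alpha+1}+g(k)b_\alpha$. Lemma \ref{Eng-lem} therefore guarantees that $\{b_\alpha\}$ is ultimately periodic modulo $m$, and purely periodic whenever $\gcd(g(k),m)=1$.

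The key step is to produce a zero in this sequence. I would formally extend the recurrence backward by setting $b_0:=g(k)^{-1}\bigl(b_2-f(k)b_1\bigr)=g(k)^{-1}\bigl(B_k(3)-f(k)B_k(1)\bigr)$. Reading the entries of Table \ref{coeff_initial} directly, one verifies the integer identity $B_k(3)=f(k)B_k(1)$ for each $k\in\{1,2,3\}$, so $b_0=0$ in $\mathbb{Z}$. This identity is structural rather than coincidental: the companion relations derived in the proof of Theorem \ref{a8k_GF} have the shape $B_k(2\alpha+2)=c_k B_k(2\alpha+1)$ with $c_k\ne 0$, so the initial condition $B_k(0)=0$ propagates backward to $B_k(-1)=0$, which is equivalent to the claimed identity.

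When $\gcd(g(k),m)=1$, the companion matrix of the 2nd-order recurrence is invertible modulo $m$, and the two-sided extended sequence $\{b_\alpha\}_{\alpha\in\mathbb{Z}}$ is purely periodic modulo $m$ with some period $r\ge 1$. Forward propagation of $b_0\equiv 0\pmod m$ yields $b_r\equiv 0\pmod m$, establishing $\mu_m(k)\le r$. For a general $m$, I would write $m=m_1 m_2$ with $\gcd(m_1,g(k))=1$ and every prime factor of $m_2$ dividing $g(k)$. The coprime case handles the residue modulo $m_1$. Modulo $m_2$ the problem reduces to a finite $p$-adic check: from the values of $f(k)$ and $g(k)$ in Table \ref{coeff_initial}, for each prime $p\mid g(k)$ one verifies the inequality $v_p(f(k))+v_p(b_2)-v_p(b_1)<v_p(g(k))$, which forces the valuations $v_p(b_\alpha)$ to grow strictly with $\alpha$, so $b_\alpha\equiv 0\pmod{m_2}$ for $\alpha$ large enough, and CRT recombines.

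The argument for $\nu_m(k)$ is exactly parallel, applied simultaneously to the $\lfloor k/3\rfloor$ sequences $\{D_{k,i}(2\beta-1)\}_{\beta\ge 1}$ using Table \ref{coeff_initial-3}: each $D_{k,i}(-1)$ vanishes by the same backward-extension identity, and the simultaneous zero required by \eqref{def_nu} is obtained in the coprime case by taking the least common multiple of the individual periods, and in the non-coprime case by the same CRT-plus-$p$-adic argument applied componentwise. The main obstacle is the bookkeeping in the non-coprime subcase, where the valuation inequalities must be verified one by one from the tables; but since only finitely many $(k,p)$ pairs arise, the total amount of computation is finite.
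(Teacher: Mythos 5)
Your strategy is essentially the paper's own: Engstrom's result supplies periodicity modulo $m$, an exact zero in the sequence is propagated along a period in the coprime case, the part of $m$ supported on the primes dividing the recurrence coefficients is handled by growth of $p$-adic valuations, and the two parts are recombined. The only real difference is bookkeeping: the paper works with the even-indexed sequences $\{B_k(2\alpha)\}_{\alpha\ge0}$ and $\{D_{k,i}(2\beta)\}_{\beta\ge0}$, which start at $0$ by construction, and transfers zeros to odd indices through the one-step relations of the form $D_{k,i}(2\beta)=-3^{w}D_{k,i}(2\beta-1)$ (and the analogous power-of-$2$ relations for $B_k$), whereas you work with the odd-indexed subsequence directly and manufacture the zero $b_0=0$ by backward extension together with the table identities $B_k(3)=f(k)B_k(1)$, $D_{k,i}(3)=h(k)D_{k,i}(1)$; these identities do hold and are equivalent to the paper's bridging relations combined with $B_k(0)=D_{k,i}(0)=0$, so this part of your argument is sound.

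Two points need repair. First, in the non-coprime case the inequality $v_p(f(k))+v_p(b_2)-v_p(b_1)<v_p(g(k))$ does not by itself ``force the valuations to grow strictly'': it only guarantees that the $f$-term dominates at the first step, and for the induction to continue you also need the domination to persist (e.g.\ $2v_p(f(k))<v_p(g(k))$, which happens to be true for the table values). The much simpler observation -- the one the paper uses -- is that the relevant prime divides \emph{both} recurrence coefficients ($2\mid f(k),g(k)$ and $3\mid h(k),r(k)$), whence $v_p(b_{\alpha+2})\ge 1+\min\{v_p(b_{\alpha+1}),v_p(b_\alpha)\}$ and $v_p(b_\alpha)\to\infty$; you should also state explicitly how the indices are synchronized at the end (the zero modulo $m_2$ holds for all sufficiently large indices, the zero modulo $m_1$ recurs along multiples of the period, so pick a large multiple of the period -- ``CRT'' alone does not align the indices). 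Second, the degenerate cases: for $k=1$ one has $g(1)=0$, so the hypothesis $a_0\neq0$ of Lemma \ref{Eng-lem} fails and your definition $b_0:=g(k)^{-1}(b_2-f(k)b_1)$ is meaningless (Table \ref{coeff_initial} also lists no $B_1$ values, so the claimed check for $k=1$ cannot be ``read off''). Here $B_1(\alpha)\equiv0$ because the degree bound in Lemma \ref{positivity-2} is $\lfloor k/2\rfloor=0$, so $\mu_m(1)=1$ trivially, and likewise $\nu_m(1)=\nu_m(2)=1$ vacuously; this needs to be said separately rather than folded into the generic second-order argument.
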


\begin{proof}
We only prove the case of $\nu_m(k)$,
and the case of $\mu_m(k)$ is completely similar.
It is sufficient to show there exists a positive integer $\beta_0$ such that $D_{k,i}(2\beta_0-1)\equiv0\pmod{m}$ for all $1\leq i\leq \lfloor\frac{k}{3}\rfloor$.
According to Theorem \ref{a3k_GF} and Table \ref{coeff_initial-3},
we observe that the corresponding $a_0$ of these linear recurring sequences $\{D_{k,i}(2\beta)\}_{\beta=0}^\infty$ are of the form $\pm3^t$ for some integer $t$.
Therefore, by Lemma \ref{Eng-lem},
if $m$ is coprime to $3$,
then the integer sequences $\{D_{k,i}(2\beta)\}_{\beta=0}^\infty$ are periodic for the modulus $m$.
This implies that there exist positive integers $\beta_0$
such that $D_{k,i}(2\beta_0)\equiv 0\pmod m$ since $D_{k,i}(0)=0$.
Observe that if $1\leq i\leq \lfloor\frac{k}{3}\rfloor$ then $-3^wD_{k,i}(2\beta-1)=D_{k,i}(2\beta)$ for some positive integer $w$.
So we deduce that $D_{k,i}(2\beta_0-1)\equiv0\pmod{m}$ for all $1\leq i\leq \lfloor\frac{k}{3}\rfloor$.

It remains to show that $\nu_m(k)$ also exists when $m$ is not prime to $3$.
 Assume that $m=3^j\cdot \ell$ with  $3\nmid \ell$ and $j\ge 1$. We discuss the property of $D_{k,i}(2\beta-1)$ modulo $3^j$ and $\ell$, respectively.
 First, it should be noticed that for any $1\leq i\leq \lfloor\frac{k}{3}\rfloor$,
 there always exists a positive integer $N_i$ such that for any $\beta\geq N_i$,
 \begin{align}\label{re-cong-1}
  D_{k,i}(2\beta-1)\equiv0 \pmod{3^j}.
 \end{align}
This is because all the coefficients $h(k)$ and $r(k)$ in the recurrence relation \eqref{3-recurring-relations} are divisible by $3$.
Second, by Lemma \ref{Eng-lem} again, we see that the integer sequences $\{D_{k,i}(2\beta)\}_{\beta=0}^\infty$ are periodic for the modulus $\ell$.
Let $r_i$ be the least period of the $\{D_{k,i}(2\beta)\}_{\beta=0}^\infty$  modulo $\ell$.
Then the least common multiple $r$ of $r_i$ $(1\leq i\leq \lfloor\frac{k}{3}\rfloor)$
 is a period of all sequences $\{D_{k,i}(2\beta)\}_{\beta=0}^\infty$, which implies
\begin{align*}
  0\equiv D_{k,i}(0)\equiv D_{k,i}(2r)\equiv D_{k,i}(4r)\equiv D_{k,i}(6r)\equiv\cdots\pmod{\ell}, \quad \text{for}\ 1\leq i \leq \left\lfloor\frac{k}{3}\right\rfloor.
\end{align*}
Combining this with the fact that if $1\leq i\leq \lfloor\frac{k}{3}\rfloor$ then $-3^wD_{k,i}(2\beta-1)=D_{k,i}(2\beta)$ for some positive integer $w$, we have
\begin{align}\label{re-cong-2}
  0\equiv D_{k,i}(2r-1)\equiv D_{k,i}(4r-1)\equiv D_{k,i}(6r-1)\equiv\cdots\pmod{\ell}, \quad \text{for}\ 1\leq i \leq \left\lfloor\frac{k}{3}\right\rfloor.
\end{align}
Let $n_0$ be an integer such that  $n_0 r\geq N=\max_{1\leq i \leq \lfloor\frac{k}{3}\rfloor}{N_i}$ and set $\beta_0=n_0 r$.
Combining \eqref{re-cong-1} with \eqref{re-cong-2}, we have
$D_{k,i}(2\beta_0-1)\equiv0 \pmod{m}.$
Therefore, we conclude that $\nu_m(k)$ must exist.
Similarly, we can show that $\mu_m(k)$ always exists for any $m\geq 2$.
This completes the proof.
\end{proof}

Recall that $c_1$  is  the integer such that $0\leq c_1\leq m-1$ and $c_1\equiv A_k(2\mu_m(k)-1)\pmod m$ and  $c_2$  is  the integer such that $0\leq c_2\leq m-1$ and $c_2\equiv C_k(2\nu_m(k)-1)\pmod m$.
According to Theorem \ref{existence_of_mu_nu}, it follows from \eqref{a8k_GF_1-eq} and \eqref{a3k_gf} that for any $m\geq 2$,
\[g_2G_2^{\mu_m(k)-1}\{f_1^{8k}\} \equiv c_1\cdot f_2^{8k} \pmod m,\]
and
\[g_3G_3^{\nu_m(k)-1}\{f_1^{3k}\} \equiv c_2\cdot f_3^{3k} \pmod m.\]
Thus we can establish infinite families of congruences for
$p_{8k}(n)$ $(1\le k \le 3)$ and $p_{3k}(n)$ $(1\le k \le 8)$.

\begin{proof}[Proof of Theorem \ref{f1_8k_congruence}]
For any $m\geq2$,
based on Theorem \ref{existence_of_mu_nu},
we have
\[g_2G_2^{\mu_m(k)-1}\{f_1^{8k}\} \equiv c_1\cdot f_2^{8k} \pmod m,\]
where $0\leq c_1\leq m-1$ and $c_1 \equiv A_k(2\mu_m(k)-1) \pmod m$.
Acting  the operator $U_2$, we obtain that
\[G_2^{\mu_m(k)}\{f_1^{8k}\} \equiv c_1\cdot f_1^{8k} \pmod m.\]
Then for $\alpha \ge 1,$ the following congruence holds
\[G_2^{\mu_m(k)(\alpha-1)}\{f_1^{8k}\} \equiv c_1^{\alpha-1} \cdot f_1^{8k} \pmod m.\]
Acting the operator $g_2G_2^{\mu_m(k)-1}$, we have
\[g_2G_2^{\mu_m(k)\alpha-1}\{f_1^{8k}\} \equiv c_1^{\alpha} \cdot f_2^{8k} \pmod m.\]
Equivalently,
\begin{align}
\sum\limits_{n=0}^\infty p_{8k}\left(2^{2\mu_m(k)\alpha-1}n + \frac{k(2^{2\mu_m(k)\alpha}-1)}{3}\right)q^n \equiv c_1^\alpha f_2^{8k} \pmod m. \label{f1_8k_cong_reason}
\end{align}
As a consequence, if $c_1^\alpha \not\equiv 0 \pmod m$ for any $\alpha\ge 1$, then we have
\begin{align*}
& p_{8k}\left(2^{2u_m(k)\alpha-1}(2n+1) + \frac{k(2^{2u_m(k)\alpha}-1)}{3}\right)\\[5pt]
= ~ & p_{8k}\left(2^{2u_m(k)\alpha}n + \frac{(2k+3)\cdot2^{2u_m(k)\alpha-1}-k}{3}\right)\\[5pt]
\equiv ~ & 0 \pmod{m}.
\end{align*}
Otherwise, there exists a positive integer $\alpha$ such that $c_1^{\alpha} \equiv 0 \pmod m$.
Then for such positive integer $\alpha$,
it follows from \eqref{f1_8k_cong_reason} that
\[p_{8k}\left(2^{2\mu_m(k)\alpha-1}n + \frac{k(2^{2\mu_m(k)\alpha}-1)}{3}\right) \equiv 0 \pmod m.\]
This completes the proof.
\end{proof}
Comparing the constant terms on both sides of \eqref{f1_8k_cong_reason} yields Corollary \ref{f1-8k-cor}.

\begin{cor}\label{f1-8k-cor}
Let $m$ and $k$ be integers with $m\geq 2$ and $1\leq k\leq3$.
Then for any $\alpha\geq 1$,
\begin{align*}
p_{8k}\left(\frac{k\cdot 2^{2\mu_m(k)\alpha}-k}{3}\right) \equiv c_1^\alpha \pmod m.
\end{align*}
\end{cor}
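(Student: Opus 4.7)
The plan is to simply extract the corollary from a congruence already established inside the proof of Theorem \ref{f1_8k_congruence}. Specifically, in the course of proving that theorem, the authors obtain
\[
\sum_{n=0}^{\infty}p_{8k}\!\left(2^{2\mu_m(k)\alpha-1}n+\frac{k(2^{2\mu_m(k)\alpha}-1)}{3}\right)q^{n}\equiv c_1^{\alpha}\,f_2^{8k}\pmod{m},
\]
which is the identity labelled \eqref{f1_8k_cong_reason}. The corollary is then a direct consequence of comparing constant terms.

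I would proceed in two short steps. First, I note that the left‑hand side, viewed as a formal power series in $q$, has constant term (i.e. the coefficient of $q^{0}$) equal to
\[
p_{8k}\!\left(\frac{k(2^{2\mu_m(k)\alpha}-1)}{3}\right)=p_{8k}\!\left(\frac{k\cdot 2^{2\mu_m(k)\alpha}-k}{3}\right),
\]
which is exactly the quantity appearing on the left‑hand side of the corollary. Second, I observe that $f_2^{8k}=\prod_{n=1}^{\infty}(1-q^{2n})^{8k}$ has constant term $1$, so the constant term of $c_1^{\alpha}f_2^{8k}$ is $c_1^{\alpha}$. Equating the constant terms modulo $m$ yields the claimed congruence $p_{8k}\!\bigl(\tfrac{k\cdot 2^{2\mu_m(k)\alpha}-k}{3}\bigr)\equiv c_1^{\alpha}\pmod{m}$.

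There is essentially no obstacle here: the corollary is purely a matter of reading off the $q^{0}$ coefficient from the key congruence derived in the body of the proof of Theorem \ref{f1_8k_congruence}. The only point worth being explicit about is that $k(2^{2\mu_m(k)\alpha}-1)/3$ is indeed an integer (which holds because $2^{2\mu_m(k)\alpha}\equiv 1\pmod 3$, so $3\mid 2^{2\mu_m(k)\alpha}-1$ for every $\alpha\geq 1$), and that the constant term of $f_2^{8k}$ is $1$, both of which are immediate.
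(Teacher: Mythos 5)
Your proposal is correct and coincides with the paper's own argument: the paper derives Corollary \ref{f1-8k-cor} precisely by comparing the constant terms on both sides of the congruence \eqref{f1_8k_cong_reason}, exactly as you do, noting that $f_2^{8k}$ has constant term $1$. Your added remark that $k(2^{2\mu_m(k)\alpha}-1)/3$ is an integer is a harmless extra check; nothing further is needed.
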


We are now ready to complete the proof of Theorem \ref{3-congruence}.

\begin{proof}[Proof of Theorem \ref{3-congruence}]
For any $m\geq2$,
based on Theorem \ref{existence_of_mu_nu},
\[g_3G_3^{\nu_m(k)-1}\{f_1^{3k}\} \equiv c_2\cdot f_3^{3k} \pmod m,\]
where $0\leq c_2\leq m-1$ and $c_2 \equiv C_k(2\nu_m(k)-1) \pmod m$.
Acting  the operator $U_3$, we obtain that
\[G_3^{\nu_m(k)}\{f_1^{3k}\} \equiv c_2\cdot f_1^{3k} \pmod m.\]
Then for $\beta \ge 1,$ we get
\[G_3^{\nu_m(k)(\beta-1)}\{f_1^{3k}\} \equiv c_2^{\beta-1} \cdot f_1^{3k} \pmod m.\]
Acting  the operator $g_3G_3^{\nu_m(k)-1}$, we have
\[g_3G_3^{\nu_m(k)\beta-1}\{f_1^{3k}\} \equiv c_2^{\beta} \cdot f_3^{3k} \pmod m,\]
that is,
\begin{align}
\sum\limits_{n=0}^\infty p_{3k}\left(3^{2\nu_m(k)\beta-1}n + \frac{k(3^{2\nu_m(k)\beta}-1)}{8}\right)q^n \equiv c_2^\beta\cdot f_3^{3k}\pmod m. \label{3-cong-reason}
\end{align}
Consequently, if $c_2^\beta \not\equiv 0 \pmod m$ for any $\beta\ge 1$, then we have
\begin{align*}
& p_{3k}\left(3^{2\nu_m(k)\beta-1}(3n+i) + \frac{k(3^{2\nu_m(k)\beta}-1)}{8}\right)\\[5pt]
= ~ & p_{3k}\left(3^{2\nu_m(k)\beta}n
+ \frac{(3k+8i)3^{2\nu_m(k)\beta-1}-k}{8}\right)\\[5pt]
\equiv ~ & 0 \pmod{m},
\end{align*}
where  $i=1$ or $2$.
Otherwise,
let $\beta$ be an integer satisfying $c_2^{\beta} \equiv 0 \pmod m$,
then we have
\[p_{3k}\left(3^{2\nu_m(k)\beta-1}n + \frac{k(3^{2\nu_m(k)\beta}-1)}{8}\right) \equiv 0 \pmod m\]
and hence the proof is complete.
\end{proof}

Equating the constants on both sides of \eqref{3-cong-reason} directly implies Corollary \ref{f1-3k-cor}.

\begin{cor}\label{f1-3k-cor}
Let $m$ and $k$ be integers with $m\geq2$ and $1\leq k\leq 8$.
Then for any $\beta\geq1$,
\begin{align*}
p_{3k}\left(\frac{k\cdot 3^{2\nu_m(k)\beta}-k}{8}\right) \equiv c_2^\beta \pmod m.
\end{align*}
\end{cor}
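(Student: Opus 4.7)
The plan is to extract Corollary \ref{f1-3k-cor} directly from the congruence \eqref{3-cong-reason} established in the proof of Theorem \ref{3-congruence}, namely
\[
\sum_{n=0}^{\infty}p_{3k}\!\left(3^{2\nu_m(k)\beta-1}n+\frac{k(3^{2\nu_m(k)\beta}-1)}{8}\right)q^n\equiv c_2^{\beta}\cdot f_3^{3k}\pmod m,
\]
which holds for every $\beta\ge1$, $m\ge2$ and $1\le k\le 8$. The corollary concerns only the constant term, so no further manipulation of $U$-operators or modular equations is needed.

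The first step is to isolate the $q^0$-coefficient on the left-hand side. Setting $n=0$ gives the value $p_{3k}\!\left(\frac{k(3^{2\nu_m(k)\beta}-1)}{8}\right)$, which is exactly $p_{3k}\!\left(\frac{k\cdot 3^{2\nu_m(k)\beta}-k}{8}\right)$ as written in the statement. The second step is to read off the constant term on the right-hand side: since $f_3=\prod_{n\ge1}(1-q^{3n})=1-q^3-q^6+\cdots$, its constant term is $1$, hence so is the constant term of $f_3^{3k}$, and the constant term of $c_2^{\beta}f_3^{3k}$ is simply $c_2^{\beta}$. Equating the two constant terms modulo $m$ yields
\[
p_{3k}\!\left(\frac{k\cdot 3^{2\nu_m(k)\beta}-k}{8}\right)\equiv c_2^{\beta}\pmod m,
\]
which is the claim.

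There is essentially no obstacle in this argument; the only subtlety worth mentioning is that one must check that $\tfrac{k(3^{2\nu_m(k)\beta}-1)}{8}$ is a nonnegative integer so that the $n=0$ term legitimately corresponds to $p_{3k}$ evaluated at an index in the domain. This is immediate because $3^{2\nu_m(k)\beta}\equiv 1\pmod 8$ (as $3^2=9\equiv 1\pmod 8$), so $8\mid 3^{2\nu_m(k)\beta}-1$, and multiplying by $k\ge 1$ keeps the result a nonnegative integer. Once this divisibility is noted, the proof reduces to a one-line constant-term comparison.
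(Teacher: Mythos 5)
Your proposal is correct and matches the paper's own argument: the paper likewise obtains the corollary by equating the constant terms on both sides of the congruence \eqref{3-cong-reason}, with the left side giving $p_{3k}\bigl(\tfrac{k(3^{2\nu_m(k)\beta}-1)}{8}\bigr)$ and the right side giving $c_2^{\beta}$ since $f_3^{3k}$ has constant term $1$. Your extra remark on the divisibility $8\mid 3^{2\nu_m(k)\beta}-1$ is a harmless additional check.
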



\section{Congruences for certain partition functions}\label{secofcongruences}

In this section,
we derive infinite families of congruences for certain partition functions,
such as the overpartition function,
$t$-core partition functions,
$\ell$-regular partition functions
in light of  Theorems \ref{a8k_GF},  \ref{a3k_GF}, \ref{f1_8k_congruence} and \ref{3-congruence}.
The values of $\mu_m(k)$ and $\nu_m(k)$ play an important role in deducing congruences,
and one can refer to Table \ref{values_u_m(k)} for the values about some small primes $m$.

\subsection{Congruences for the overpartition function}

Recall that an overpartition of $n$ is a partition of $n$,
where the first occurrence of each distinct part may be overlined. We denote the number of overpartitions of $n$ by $\overline{p}(n)$.
The generating function of $\overline{p}(n)$ is
\begin{equation*}
  \sum_{n=0}^{\infty}\overline{p}(n)q^n=\frac{(-q;q)_\infty}{(q;q)_\infty}=\frac{f_2}{f^2_1}.
\end{equation*}
In order to obtain the congruences of $\overline{p}(n)$,
we employ a 4-dissection of $f_2/f^2_1$.
This identity was proved by Andrews, Passary, Sellers and Yee \cite{Andrews2016}. Here we give a new  proof.

\begin{lem}\label{f2f1-4-dissection1}
We have
\begin{equation*}
\frac{f_2}{f^2_1}=\frac{f_{8}^{19}}{f_4^{14}f_{16}^6}
+2q\frac{f_8^{13}}{f_4^{12}f_{16}^2}+4q^2\frac{f_8^7f_{16}^2}{f_4^{10}}
  +8q^3\frac{f_8f_{16}^6}{f_4^8}.
\end{equation*}
\end{lem}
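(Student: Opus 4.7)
The plan is to reach the claimed 4-dissection by applying two standard 2-dissections of Ramanujan's theta functions and then collapsing the result using a well-known squaring identity.

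First I would rewrite the target in terms of Ramanujan's theta function $\varphi(q)$: since $\varphi(-q)=f_1^2/f_2$, we have $\frac{f_2}{f_1^2}=\frac{1}{\varphi(-q)}$. Multiplying the numerator and denominator by $\varphi(q)$ and using $\varphi(q)\varphi(-q)=\varphi(-q^2)^2$ gives $\frac{f_2}{f_1^2}=\frac{\varphi(q)}{\varphi(-q^2)^2}$. Then Jacobi's 2-dissection $\varphi(q)=\varphi(q^4)+2q\psi(q^8)$ produces the 2-dissection
\begin{equation*}
\frac{f_2}{f_1^2}=\frac{\varphi(q^4)}{\varphi(-q^2)^2}+2q\cdot\frac{\psi(q^8)}{\varphi(-q^2)^2}.
\end{equation*}

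To refine this into a 4-dissection I would 2-dissect $\frac{1}{\varphi(-q^2)^2}$ as a series in $q^2$. The cleanest route is to write $\frac{1}{\varphi(-u)^2}=\frac{\varphi(u)^2}{\varphi(-u^2)^4}$ and apply the squaring identity $\varphi(u)^2=\varphi(u^2)^2+4u\psi(u^4)^2$; setting $u=q^2$ yields
\begin{equation*}
\frac{1}{\varphi(-q^2)^2}=\frac{\varphi(q^8)^2+4q^4\psi(q^{16})^2}{\varphi(-q^4)^4}+4q^2\cdot\frac{\varphi(q^8)\psi(q^{16})}{\varphi(-q^4)^4}.
\end{equation*}
Substituting this back and multiplying out produces four summands of the shape $q^i\cdot(\text{series in }q^4)$ for $i=0,1,2,3$, which is the desired decomposition by residue class modulo $4$.

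The crucial algebraic step is then to invoke the squaring identity once more, one level up: $\varphi(q^4)^2=\varphi(q^8)^2+4q^4\psi(q^{16})^2$. This collapses the $q^0$ and $q^1$ components from two-term expressions to the single quantities $\varphi(q^4)^3/\varphi(-q^4)^4$ and $2q\cdot\psi(q^8)\varphi(q^4)^2/\varphi(-q^4)^4$, respectively. Finally I would substitute $\varphi(q^4)=f_8^5/(f_4^2 f_{16}^2)$, $\psi(q^8)=f_{16}^2/f_8$, $\varphi(q^8)=f_{16}^5/(f_8^2 f_{32}^2)$, $\psi(q^{16})=f_{32}^2/f_{16}$, and $\varphi(-q^4)=f_4^2/f_8$ and cancel, converting each of the four coefficients into the claimed monomial in $f_4,f_8,f_{16}$.

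The main obstacle is precisely this final collapse. Without re-using the squaring identity, two of the four summands appear as sums of two monomials that a priori involve $f_{32}$, and the vanishing of $f_{32}$ would look miraculous; once the re-use of $\varphi(q^4)^2=\varphi(q^8)^2+4q^4\psi(q^{16})^2$ is noticed, each coefficient reduces by a short ratio simplification to the required monomial. The rest is routine Euler-product bookkeeping.
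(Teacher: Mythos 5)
Your proof is correct, but it follows a genuinely different route from the paper. The paper works directly with Euler products: it writes $\frac{f_2}{f_1^2}=f_2\cdot\frac{1}{f_1^2}$, applies Berndt's $2$-dissection of $1/f_1^2$, pulls out a factor $1/f_2^4$, and then applies the $2$-dissection of $1/f_1^4$ (with $q\mapsto q^2$); multiplying out immediately gives the four terms. You instead pass to Ramanujan theta functions, using $\varphi(-q)=f_1^2/f_2$, $\varphi(q)\varphi(-q)=\varphi(-q^2)^2$, Jacobi's dissection $\varphi(q)=\varphi(q^4)+2q\psi(q^8)$, and the squaring identity $\varphi(q)^2=\varphi(q^2)^2+4q\psi(q^4)^2$; all of these are standard and the final eta-quotient substitutions do reproduce the four claimed monomials (I checked each residue class, and the $f_{32}$'s cancel as you predict, since $\varphi(q^8)\psi(q^{16})=f_{16}^4/f_8^2$). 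Two small points: your displayed dissection of $1/\varphi(-q^2)^2$ does not follow from ``setting $u=q^2$'' in the squaring identity alone --- that step gives the cleaner form $\bigl(\varphi(q^4)^2+4q^2\psi(q^8)^2\bigr)/\varphi(-q^4)^4$, which already separates the exponents mod $4$ and would let you skip the subsequent ``collapse''; your version additionally uses $\psi(q^8)^2=\varphi(q^8)\psi(q^{16})$ (the standard $\psi(q)^2=\varphi(q)\psi(q^2)$), which you should cite explicitly since it is also what converts the $q^2$ and $q^3$ coefficients to single monomials. In short, your theta-function argument buys conceptual transparency (each coefficient is a ratio of theta functions) at the cost of invoking more identities, whereas the paper's two-line product manipulation is shorter and self-contained given the two Berndt entries it quotes.
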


\begin{proof}
Employing the 2-dissection \cite[p.~40, Entry 25]{Berndt-1991} of $1/f_1^2$
\[\frac{1}{f_1^2}=\frac{f_8^5}{f_2^5f_{16}^2}+2q\frac{f_4^2f_{16}^2}{f_2^5f_8},\]
 we deduce that
\begin{align*}
  \frac{f_2}{f_1^2}
=f_2\left(
\frac{f_8^5}{f_2^5f_{16}^2}+2q\frac{f_4^2f_{16}^2}{f_2^5f_8}\right)
=\frac{1}{f_2^4}\left(\frac{f_8^5}{f_{16}^2}+2q\frac{f_4^2f_{16}^2}{f_8}\right).
\end{align*}
Thanks to the 2-dissection \cite[p.~40, Entry 25]{Berndt-1991} of $1/f_1^4$
\[\frac{1}{f_1^4}=\frac{f_4^{14}}{f_2^{14}f_{8}^4}
+4q\frac{f_4^2f_{8}^4}{f_2^{10}},\]
we derive that
\begin{align*}
  \frac{f_2}{f_1^2}
&=\left(\frac{f_8^{14}}{f_4^{14}f_{16}^4}
+4q^2\frac{f_8^2f_{16}^4}{f_4^{10}}\right)\left(\frac{f_8^5}{f_{16}^2}
+2q\frac{f_4^2f_{16}^2}{f_8}\right)\\[5pt]
&=\frac{f_8^{19}}{f_4^{14}f_{16}^6}+2q\frac{f_8^{13}}{f_4^{12}f_{16}^2}
+4q^2\frac{f_8^7f_{16}^2}{f_4^{10}}+8q^3\frac{f_8f_{16}^6}{f_4^8}.
\end{align*}
This completes the proof.
\end{proof}

The generating functions of
$\overline{p}(4n+k)$ $(k=0,1,2\ \text{and}\ 3)$ immediately follow from Lemma~\ref{f2f1-4-dissection1}.
In fact, Fortin,
Jacob and Mathieu \cite{Fortin-2005}, and Hirschhorn and Sellers \cite{Hirschhorn-Sellers-2005}
established the
$2$-, $3$-, and $4$-dissections
of the generating function for $\overline{p}(n)$.

\begin{cor}\label{generatingoverpartitions}
  We have
  \begin{align*}
    \sum_{n=0}^\infty \overline{p}(4n)q^n   &= \frac{f_2^{19}}{f_1^{14}f_4^6},\\[5pt] \sum_{n=0}^{\infty}\overline{p}(4n+1)q^n&=2\frac{f_2^{13}}{f_1^{12}f_4^{2}},\\[5pt]
    \sum_{n=0}^{\infty}\overline{p}(4n+2)q^n&=4\frac{f_2^7f_{4}^{2}}{f_1^{10}},
  \end{align*}
  and
  \begin{align*}
  \sum_{n=0}^{\infty}\overline{p}(4n+3)q^n&=8\frac{f_2 f_{4}^{6}}{f_1^{8}}.
  \end{align*}
\end{cor}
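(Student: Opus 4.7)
The plan is to read off the four generating functions directly from the 4-dissection in Lemma \ref{f2f1-4-dissection1}, since the lemma already separates $f_2/f_1^2 = \sum_{n \ge 0} \overline{p}(n) q^n$ into the four residue classes modulo $4$. First I would point out the structural observation that each of the four terms on the right-hand side of Lemma \ref{f2f1-4-dissection1} involves only $f_4$, $f_8$, and $f_{16}$; consequently each such term is a power series in $q^4$, and the prefactors $q^0, q^1, q^2, q^3$ are precisely the offsets that sort the summands into the residue classes $0,1,2,3 \pmod 4$.

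Next I would extract the coefficient of $q^{4n+k}$ on both sides of Lemma \ref{f2f1-4-dissection1} for each $k \in \{0,1,2,3\}$. On the left this gives $\overline{p}(4n+k)$, while on the right one simply drops the factor $q^k$ and keeps the corresponding term. After that, I replace $q^4$ by $q$ throughout, which under the notation $f_t = \prod_{n \ge 1}(1-q^{nt})$ converts $f_4 \to f_1$, $f_8 \to f_2$, and $f_{16} \to f_4$. This yields exactly the four claimed identities:
\begin{align*}
\sum_{n=0}^{\infty}\overline{p}(4n)q^n &= \frac{f_2^{19}}{f_1^{14}f_4^{6}}, &
\sum_{n=0}^{\infty}\overline{p}(4n+1)q^n &= 2\frac{f_2^{13}}{f_1^{12}f_4^{2}},\\[4pt]
\sum_{n=0}^{\infty}\overline{p}(4n+2)q^n &= 4\frac{f_2^{7}f_4^{2}}{f_1^{10}}, &
\sum_{n=0}^{\infty}\overline{p}(4n+3)q^n &= 8\frac{f_2 f_4^{6}}{f_1^{8}}.
\end{align*}

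There is essentially no obstacle here beyond careful bookkeeping of the subscripts during the substitution $q^4 \mapsto q$. All the real work has already been done in the proof of Lemma \ref{f2f1-4-dissection1}; the corollary is a direct coefficient-matching argument.
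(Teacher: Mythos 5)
Your proposal is correct and is exactly the paper's argument: the corollary is stated as an immediate consequence of Lemma \ref{f2f1-4-dissection1}, obtained by reading off each residue class modulo $4$ from the dissection and replacing $q^4$ by $q$ (so $f_4\to f_1$, $f_8\to f_2$, $f_{16}\to f_4$). Nothing further is needed.
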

So we deduce that
\begin{equation}\label{overpartition_4n_1}
  \frac{\sum_{n=0}^{\infty}\overline{p}(4n+1)q^n}{2}\equiv f_1^6\pmod 2,
\end{equation}
\begin{equation}\label{overpartition_4n_2}
  \frac{\sum_{n=0}^{\infty}\overline{p}(4n+2)q^n}{4}\equiv f_1^{12}\pmod 2,
\end{equation}
and
\begin{equation}\label{overpartition_4n_3}
  \frac{\sum_{n=0}^{\infty}\overline{p}(4n+3)q^n}{8}\equiv f_1^{18}\pmod 2.
\end{equation}

Applying Theorem \ref{3-congruence} to the above identities, we obtain some congruences for  $\overline{p}(n)$.

\begin{thm}\label{overpartition-cong}
For any $\alpha\ge 1$, $n\ge 0$ and $i=1$ or $2$,
we have
\begin{description}
  \item[$\mathrm{(1)}$] $\overline{p}(4\cdot 3^{2\alpha}n+(4i+3)\cdot 3^{2\alpha-1})\equiv 0 \pmod{2^2};$
  \item[$\mathrm{(2)}$] $\overline{p}(4\cdot 3^{2\alpha}n+(4i+6)\cdot 3^{2\alpha-1})\equiv 0 \pmod {2^3};$
  \item[$\mathrm{(3)}$] $\overline{p}(4\cdot 3^{4\alpha}n+(4i+9)\cdot 3^{4\alpha-1})\equiv 0 \pmod {2^4}$.
\end{description}
\end{thm}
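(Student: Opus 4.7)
The plan is to reduce each of the three claimed congruences to a single application of Theorem \ref{3-congruence} with modulus $m=2$, using the mod-$2$ reductions \eqref{overpartition_4n_1}--\eqref{overpartition_4n_3} to pass from $\overline{p}$ to a pure power of the Euler product.

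First, multiplying \eqref{overpartition_4n_1}, \eqref{overpartition_4n_2} and \eqref{overpartition_4n_3} by $2$, $4$ and $8$ respectively yields
\begin{align*}
\sum_{n=0}^\infty \overline{p}(4n+1)q^n &\equiv 2\, f_1^{6} \pmod{2^2},\\
\sum_{n=0}^\infty \overline{p}(4n+2)q^n &\equiv 4\, f_1^{12} \pmod{2^3},\\
\sum_{n=0}^\infty \overline{p}(4n+3)q^n &\equiv 8\, f_1^{18} \pmod{2^4}.
\end{align*}
Comparing coefficients of $q^N$, the three parts reduce to proving $p_{6}(N_1)\equiv p_{12}(N_2)\equiv p_{18}(N_3)\equiv 0\pmod 2$ for the values of $N_j$ which, after the substitution $N\mapsto 4N+r$ with $r=1,2,3$ respectively, produce the progressions claimed in (1)--(3).

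Second, I apply Theorem \ref{3-congruence} with $m=2$ and $k=2,4,6$ in turn. This requires reading off $\nu_2(k)$ from Table \ref{coeff_initial-3} via the recurrence \eqref{3-recurring-relations}, and deciding which branch of the theorem applies. I expect $\nu_2(2)=\nu_2(4)=1$ and $\nu_2(6)=2$; in particular $\nu_2(2)=1$ is automatic, since $\lfloor 2/3\rfloor=0$ makes the defining condition on the $D_{2,i}$ vacuous. In each case one also checks that the associated constant $c_2$ is odd, putting us in branch (i) of Theorem \ref{3-congruence}.

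Third, I substitute the arithmetic progressions produced by branch (i) into the map $N\mapsto 4N+r$. A direct computation from $N=3^{2\nu_2(k)\alpha}n + \bigl((3k+8i)\cdot 3^{2\nu_2(k)\alpha-1}-k\bigr)/8$ shows that $4N+1$ with $k=2$ equals $4\cdot 3^{2\alpha}n+(4i+3)\cdot 3^{2\alpha-1}$; that $4N+2$ with $k=4$ equals $4\cdot 3^{2\alpha}n+(4i+6)\cdot 3^{2\alpha-1}$; and that $4N+3$ with $k=6$ (where $\nu_2(6)=2$ turns $3^{2\alpha}$ into $3^{4\alpha}$) equals $4\cdot 3^{4\alpha}n+(4i+9)\cdot 3^{4\alpha-1}$. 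These match the indices in parts (1), (2), (3) verbatim.

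The only genuinely non-routine step is the second one: confirming the values $\nu_2(4)$ and $\nu_2(6)$ by iterating \eqref{3-recurring-relations} modulo $2$ until the condition $D_{k,i}(2\beta-1)\equiv 0\pmod 2$ is first satisfied for all relevant $i$, and checking the parity of the corresponding $c_2=A_k(2\nu_2(k)-1)\bmod 2$ to confirm that branch (i) rather than branch (ii) is active. Once these finite computations are in hand, everything else is bookkeeping.
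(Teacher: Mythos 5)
Your proposal is correct and follows essentially the same route as the paper: reduce via \eqref{overpartition_4n_1}--\eqref{overpartition_4n_3} to parity statements for $p_6$, $p_{12}$, $p_{18}$, then invoke Theorem \ref{3-congruence} with $m=2$, $k=2,4,6$, using $\nu_2(2)=\nu_2(4)=1$, $\nu_2(6)=2$ and the fact that the relevant $c_2$ is odd (branch (i)), followed by the same index bookkeeping. The finite checks you flag (the values of $\nu_2(4)$, $\nu_2(6)$ and the parity of $c_2$) are exactly what the paper reads off from Theorem \ref{a3k_GF} and Table \ref{coeff_initial-3}, so nothing is missing.
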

\begin{proof}
\begin{description}
    \item[$\mathrm{(1)}$]
    For any $n\geq0$,
    it follows from \eqref{overpartition_4n_1} that
    \begin{align}\label{overpatition_a6}
    \frac{\overline{p}(4n+1)}{2} \equiv p_6(n) \pmod 2.
    \end{align}
    Due to  Theorem \ref{a3k_GF},
    we deduce that $\nu_2(2)=1$ and
    $g_3\{f_1^{6}\} \equiv f_3^{6} \pmod 2.$
    Combining this with Theorem \ref{3-congruence},
    we see that for any  $\alpha \ge 1$ and $ n\ge 0$,
    the congruences
    \[p_6\left(3^{2\alpha}n+3^{2\alpha-1} i + \frac{3^{2\alpha}-1}{4}\right) \equiv 0 \pmod 2\]
    hold for  $i=1$ and $2$.
    Then by \eqref{overpatition_a6} we arrive at
 \[\overline{p}\left(4\cdot 3^{2\alpha}n+(4i+3)\cdot 3^{2\alpha-1}\right)\equiv 0 \pmod{2^2}.\]
    \item[$\mathrm{(2)}$]
    By \eqref{overpartition_4n_2},
    we have for any $n\ge 0$,
\begin{align}\label{overpatition_a12}
\frac{\overline{p}(4n+2)}{4} \equiv p_{12}(n) \pmod 2.
\end{align}
Due to  Theorem \ref{a3k_GF},
we deduce that $\nu_2(4)=1$ and
$g_3\{f_1^{12}\} \equiv f_3^{12} \pmod 2.$
Combining this with Theorem \ref{3-congruence},
we derive that for any $\alpha \ge 1$, $n\ge 0$ and $i=1$ or $2$,
\[p_{12}\left(3^{2\alpha}n+3^{2\alpha-1} i + \frac{3^{2\alpha}-1}{2}\right) \equiv 0 \pmod 2.\]
Substituting this into \eqref{overpatition_a12} yields
\[\overline{p}\left(4\cdot 3^{2\alpha}n+(4i+6)\cdot 3^{2\alpha-1}\right)\equiv 0 \pmod {2^3}.\]
    \item[$\mathrm{(3)}$]
    In terms of \eqref{overpartition_4n_3},
    we have for any $n\ge 0$,
\begin{align}\label{overpatition_a18}
\frac{\overline{p}(4n+3)}{8} \equiv p_{18}(n) \pmod 2.
\end{align}
Due to  Theorem \ref{a3k_GF},
we deduce that $\nu_2(6)=2$ and $g_3G_3\{f_1^{18}\} \equiv f_3^{18} \pmod 2.$
In light of Theorem \ref{3-congruence},
we obtain that for any $\alpha \ge 1$, $n\ge 0$ and $i=1$ or $2$,
\[p_{18}\left(3^{4\alpha}n+3^{4\alpha-1} i +  \frac{3^{4\alpha+1}-3}{4}\right) \equiv 0 \pmod 2.\]
Substituting this into \eqref{overpatition_a18} yields
\[\overline{p}\left(4\cdot 3^{4\alpha}n+(4i+9)\cdot 3^{4\alpha-1}\right)\equiv 0 \pmod {2^4}.\]
\end{description}
This completes the proof.
\end{proof}

\begin{rem}
  Yang, Cui and Lin \cite{Yang-2017} proved some infinite families of congruences modulo powers of $2$ for $\overline{p}(n)$. For example, for $\alpha\geq 0$, $n\geq 0$ and $i=1$ or  $2$, they showed that
  \begin{align*}
    \overline{p}(8\cdot3^{4\alpha+4}n+(4i+9)\cdot 3^{4\alpha+3})\equiv 0\pmod{2^8},
  \end{align*}
  which contains the case of $\mathrm{(3)}$ in Theorem \ref{overpartition-cong}
  when $n$ is even.
\end{rem}

\subsection{Congruences for $t$-core partition functions}
Next we will use  Theorem \ref{3-congruence} to prove some congruences for $t$-core partition functions.
The following generating function of the number of $t$-core partitions of $n$ is given by Garvan, Kim
and Stanton \cite{Garvan-1990},
\[\sum_{n=0}^{\infty}a_{t}(n)q^n=\frac{f_{t}^{t}}{f_1}.\]
\begin{thm}
For any $\alpha\ge 1, \beta\ge 0, n\ge 0$ and $i=1$ or $2$,
\begin{align}\label{2-core-congruence}
a_2\left(3^{2\alpha} n + 3^{2\alpha-1}i+ \frac{3^{2\alpha}-1}{8}\right) \equiv 0 \pmod 2
\end{align}
and
\[a_2\left(\frac{3^{2\beta}-1}{8}\right) \equiv 1\pmod 2.\]
\end{thm}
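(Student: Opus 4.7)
The plan is to reduce both assertions to direct applications of Theorem \ref{3-congruence} and Corollary \ref{f1-3k-cor} with $k=1$ and $m=2$. The first step will be to establish that $a_2(n) \equiv p_3(n) \pmod 2$. This follows from the identity $(1-q^{2n}) = (1-q^n)(1+q^n) \equiv (1-q^n)^2 \pmod 2$, which yields $f_2 \equiv f_1^2 \pmod 2$, so
$$\sum_{n=0}^\infty a_2(n) q^n = \frac{f_2^2}{f_1} \equiv \frac{f_1^4}{f_1} = f_1^3 \pmod 2.$$

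Next I will verify the constants required to invoke Theorem \ref{3-congruence} with $k=1$ and $m=2$. Because $\lfloor 1/3\rfloor = 0$, the condition in the definition of $\nu_2(1)$ is vacuous, so $\nu_2(1) = 1$. Applying \eqref{g_3_action} together with the entry $U_3\{Y\} = -3$ from Table \ref{U_3_value}, I obtain $g_3\{f_1^3\} = f_3^3\, U_3\{Y\} = -3 f_3^3$, hence $C_1(1) = -3$ and $c_2 \equiv 1 \pmod 2$. In particular $c_2^\beta \not\equiv 0 \pmod 2$ for every $\beta \geq 1$, and part $(\mathrm{i})$ of Theorem \ref{3-congruence} yields
$$p_3\left(3^{2\alpha} n + \frac{(3+8i)\cdot 3^{2\alpha-1}-1}{8}\right) \equiv 0 \pmod 2$$
for every $\alpha \geq 1$, $n \geq 0$ and $i \in \{1,2\}$. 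The algebraic identity $\frac{(3+8i)\cdot 3^{2\alpha-1}-1}{8} = 3^{2\alpha-1} i + \frac{3^{2\alpha}-1}{8}$, combined with $a_2 \equiv p_3 \pmod 2$, then delivers \eqref{2-core-congruence}.

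For the second statement, the case $\beta = 0$ is trivial since $a_2(0) = 1$. For $\beta \geq 1$, Corollary \ref{f1-3k-cor} with $k=1$ and $m=2$ gives $p_3\!\left(\frac{3^{2\beta}-1}{8}\right) \equiv c_2^\beta \equiv 1 \pmod 2$, and $a_2 \equiv p_3 \pmod 2$ completes the argument. The only point demanding any care is the identification $c_2 \equiv 1 \pmod 2$, but this reduces to reading a single entry of Table \ref{U_3_value}, so no serious obstacle is anticipated.
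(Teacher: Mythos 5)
Your proposal is correct and follows essentially the same route as the paper: reduce via $f_2\equiv f_1^2\pmod 2$ to $a_2(n)\equiv p_3(n)\pmod 2$, then invoke Theorem \ref{3-congruence}(i) with $k=1$, $m=2$, $\nu_2(1)=1$, $c_2\equiv 1\pmod 2$, and Corollary \ref{f1-3k-cor} for the second congruence. Your explicit verification that $\nu_2(1)=1$ is vacuous (since $\lfloor 1/3\rfloor=0$) and that $c_2\equiv -3\equiv 1\pmod 2$, plus the separate treatment of $\beta=0$, is if anything slightly more careful than the paper's wording.
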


\begin{proof}
Since $\sum\limits_{n=0}^\infty a_2(n)q^n = \frac{f_2^2}{f_1} \equiv f_1^3 \pmod 2$,
it follows that for any $n\ge 0$,
\begin{align*}
a_2(n)\equiv p_3(n) \pmod 2.
\end{align*}
By Theorem \ref{a3k_GF},
we have $\nu_2(1)=1$ and $g_3\{f_1^3\} \equiv f_3^3 \pmod 2$.
Then Theorem \ref{3-congruence} implies that for any $\alpha\ge 1$ and $n\ge 0$,
\[p_3\left(3^{2\alpha} n + 3^{2\alpha-1}i+ \frac{3^{2\alpha}-1}{8}\right) \equiv 0 \pmod 2,\]
where  $i=1$ or $2$. Then the congruence \eqref{2-core-congruence} holds as claimed.
By Corollary \ref{f1-3k-cor}, we have for any $\beta\ge 0$,
\[a_2\left(\frac{3^{2\beta}-1}{8}\right) \equiv p_3\left(\frac{3^{2\beta}-1}{8}\right)
 \equiv 1\pmod 2.\]
 This completes the proof.
\end{proof}

Hirschhorn and Sellers \cite{Hirschhorn-Sellers-1999} conjectured that for any $t\geq 2$ and $k=0, 2$,
\[a_{2^t}\left(\frac{3^{2^{t-1}-1}(24n+8k+7)-\frac{4^t-1}{3}}{8}\right)\equiv 0\pmod 2\]
holds for all $n\geq 0$,
which has been proved by Chen \cite{Chen2013}.
We can also deduce congruences for
the $4$-core partition function.
More specifically,
we have for any $\alpha\ge 1$, $\beta\geq 0$, $n\ge 0$ and $i=1$ or $2$
\begin{align}\label{4-core-congruence}
  a_4\left(3^{2\alpha} n +  \frac{(8i+15)\cdot 3^{2\alpha-1}-5}{8}\right) \equiv 0 \pmod 2,
\end{align}
and
\begin{align*}
a_4\left(\frac{5\cdot 3^{2\beta}-5}{8}\right) \equiv 1\pmod 2.
\end{align*}
It is obvious that
setting $\alpha=1$ in
\eqref{4-core-congruence} yields Hirschhorn and Seller's conjecture for $t=2$.
Moreover, Gao, Cui and Guo \cite[Theorem 1.2]{Gao-2017} contains the case of $i=1$.

\subsection{Congruences for $\ell$-regular partition functions}

Now we study arithmetic properties for $\ell$-regular partition functions.
Recall that the generating function of $b_\ell(n)$ is given by
\[\sum_{n=0}^{\infty}b_\ell(n)q^n=\frac{f_\ell}{f_1}.\]

\begin{thm}\label{25regular}
For any $\alpha\ge 1$, $\beta\geq0$, $n\ge 0$ and $i=1$ or $2$,
\begin{align}\label{25-regular-2-cong}
b_{25}\left(2^{4\alpha}n + 3\cdot 2^{4\alpha-1}-1\right) \equiv 0 \pmod 5,
\end{align}
\begin{align}\label{25-regular-3-cong}
b_{25}\left(3^{4\alpha}n + (i+3)\cdot 3^{4\alpha-1}-1\right) \equiv 0 \pmod 5,
\end{align}
 and
\begin{align*}
b_{25}\left(3^{4\beta}-1\right) \equiv 1\pmod 5.
\end{align*}
\end{thm}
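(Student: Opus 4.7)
The plan is to reduce every statement about $b_{25}(n)$ modulo $5$ to a statement about $p_{24}(n)$, and then quote Theorems \ref{f1_8k_congruence} and \ref{3-congruence} (respectively Corollary \ref{f1-3k-cor}) with the correct choice of $k$. The only genuinely new work is pinning down the small invariants $\mu_5(3)$, $\nu_5(8)$ and the residues $c_1$, $c_2$ modulo $5$, all of which are finite computations with the recurrences \eqref{2-recurring-relations}, \eqref{3-recurring-relations} and the initial data in Tables \ref{coeff_initial} and \ref{coeff_initial-3}.

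First I would set up the base reduction. Since $(1-q)^5\equiv 1-q^5\pmod 5$, iterating once gives $f_1^{25}\equiv f_{25}\pmod 5$, hence
\[
\sum_{n=0}^{\infty}b_{25}(n)q^n=\frac{f_{25}}{f_1}\equiv \frac{f_1^{25}}{f_1}=f_1^{24}\pmod 5,
\]
so $b_{25}(n)\equiv p_{24}(n)\pmod 5$ for all $n\ge 0$. Since $24=8\cdot 3=3\cdot 8$, the same function $p_{24}$ is covered by Theorem \ref{f1_8k_congruence} with $k=3$ and by Theorem \ref{3-congruence} with $k=8$.

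For congruence (\ref{25-regular-2-cong}), I would iterate \eqref{2-recurring-relations} starting from the $k=3$ initial data in Table \ref{coeff_initial} to compute $B_3(2\alpha-1)\bmod 5$ for the first few $\alpha$; I expect the first zero to occur at $\alpha=2$, giving $\mu_5(3)=2$, and simultaneously to read off $c_1\equiv A_3(3)\pmod 5$ with $c_1\not\equiv 0\pmod 5$. Theorem \ref{f1_8k_congruence}(i) with $k=3$, $m=5$ then produces exactly
\[
p_{24}\!\left(2^{4\alpha}n+\tfrac{9\cdot 2^{4\alpha-1}-3}{3}\right)=p_{24}\!\left(2^{4\alpha}n+3\cdot 2^{4\alpha-1}-1\right)\equiv 0\pmod 5,
\]
which together with the base reduction is (\ref{25-regular-2-cong}). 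For (\ref{25-regular-3-cong}) I would perform the analogous computation using \eqref{3-recurring-relations} and Table \ref{coeff_initial-3}: iterate to find the smallest $\beta\ge 1$ for which both $D_{8,1}(2\beta-1)$ and $D_{8,2}(2\beta-1)$ vanish modulo $5$, which I expect to be $\beta=2$, so $\nu_5(8)=2$; Theorem \ref{3-congruence}(i) with $k=8$, $m=5$, and $i\in\{1,2\}$ then yields
\[
p_{24}\!\left(3^{4\alpha}n+\tfrac{(24+8i)\cdot 3^{4\alpha-1}-8}{8}\right)=p_{24}\!\left(3^{4\alpha}n+(i+3)\cdot 3^{4\alpha-1}-1\right)\equiv 0\pmod 5,
\]
which is (\ref{25-regular-3-cong}).

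The third assertion is immediate once $\nu_5(8)=2$ and $c_2\equiv C_8(3)\pmod 5$ have been determined: Corollary \ref{f1-3k-cor} with $k=8$, $m=5$ gives
\[
b_{25}(3^{4\beta}-1)\equiv p_{24}(3^{4\beta}-1)\equiv c_2^{\beta}\pmod 5\qquad(\beta\ge 1),
\]
and the computation in the preceding paragraph also delivers $c_2\equiv 1\pmod 5$, so $c_2^{\beta}\equiv 1\pmod 5$ for every $\beta\ge 1$; the case $\beta=0$ is handled separately by the convention $b_{25}(0)=1$. Thus the only obstacle is the numerical check of $\mu_5(3)$, $\nu_5(8)$, $c_1\bmod 5$ and $c_2\bmod 5$ from the two tables; once these are in hand, each of the three congruences is a direct specialization of a theorem already proved.
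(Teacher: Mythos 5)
Your proposal is correct and follows essentially the same route as the paper: reduce $b_{25}(n)\equiv p_{24}(n)\pmod 5$, verify from the recurrences and tables that $\mu_5(3)=2$, $\nu_5(8)=2$ with $c_1\equiv c_2\equiv 1\pmod 5$, and then specialize Theorem \ref{f1_8k_congruence}(i) with $k=3$, Theorem \ref{3-congruence}(i) with $k=8$, and Corollary \ref{f1-3k-cor}. The numerical checks you defer are exactly the finite computations the paper performs (its Tables confirm $\mu_5(3)=\nu_5(8)=2$), and your separate treatment of $\beta=0$ via $b_{25}(0)=1$ is a small point the paper passes over silently.
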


\begin{proof}
It is easy to see that
\begin{equation}\label{a24-2-identity3}
\sum\limits_{n=0}^\infty b_{25}(n)q^n \equiv f_1^{24} \pmod 5.
\end{equation}
By Theorem \ref{a8k_GF}  we find that $\mu_5(3)=2$ and
\begin{align}\label{a24-2-identity}
g_2G_2\{f_1^{24}\} \equiv f_2^{24} \pmod 5.
\end{align}
By Theorem \ref{a3k_GF} we obtain  that $\nu_5(8)=2$ and
\begin{align}\label{a24-3-identity}
g_3G_3\{f_1^{24}\} \equiv f_3^{24} \pmod 5.
\end{align}
Then Theorem \ref{f1_8k_congruence} and Theorem \ref{3-congruence} imply that \eqref{25-regular-2-cong} and \eqref{25-regular-3-cong} hold, respectively.

Combining  \eqref{a24-3-identity} with Corollary \ref{f1-3k-cor}, we find that  for any $\beta \ge 0$,
\[b_{25}\left(3^{4\beta}-1\right)\equiv 1 \pmod 5.\]
This completes the proof.
\end{proof}

By considering Theorem \ref{f1_8k_congruence} and Theorem \ref{3-congruence} together,  we obtain a generalized form of Theorem \ref{25regular}.
\begin{cor}
  For any $\alpha\geq 1$, $\beta\geq 0$ and $i=1$ or $2$, we have
  \[b_{25}\left(2^{4\beta}3^{4\alpha}\cdot n+(i+3)\cdot 2^{4\beta}3^{4\alpha-1}-1\right)\equiv 0\pmod 5,\]
  and
  \[b_{25}\left(2^{4\alpha}3^{4\beta}\cdot n+2^{4\alpha-1}3^{4\beta+1}-1\right)\equiv 0\pmod 5.\]
\end{cor}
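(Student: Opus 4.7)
The plan is to compose the prime-$2$ and prime-$3$ operator arguments from the proof of Theorem \ref{25regular}. That proof established $\mu_5(3)=\nu_5(8)=2$ along with $g_2G_2\{f_1^{24}\}\equiv f_2^{24}\pmod 5$ and $g_3G_3\{f_1^{24}\}\equiv f_3^{24}\pmod 5$; comparing with the identities $g_2G_2^{\mu_m(k)-1}\{f_1^{8k}\}\equiv c_1f_2^{8k}\pmod m$ and $g_3G_3^{\nu_m(k)-1}\{f_1^{3k}\}\equiv c_2f_3^{3k}\pmod m$ forces $c_1\equiv c_2\equiv 1\pmod 5$. Applying $U_2$ or $U_3$ and iterating then yields $G_2^{2\beta}\{f_1^{24}\}\equiv f_1^{24}\pmod 5$ and $G_3^{2\beta}\{f_1^{24}\}\equiv f_1^{24}\pmod 5$ for every $\beta\geq 0$. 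This invariance of $f_1^{24}\pmod 5$ under either chain is exactly what allows the two chains to be composed in either order.

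For the first congruence I would start from $\sum_{n\geq 0}b_{25}(n)q^n\equiv f_1^{24}\pmod 5$ and first apply $G_2^{2\beta}$, obtaining
\[\sum_{n\geq 0}b_{25}\bigl(2^{4\beta}n+2^{4\beta}-1\bigr)q^n\equiv f_1^{24}\pmod 5.\]
Then I would apply $g_3G_3^{2\alpha-1}$; mirroring the argument in the proof of Theorem \ref{3-congruence}, the right-hand side collapses to $c_2^{\alpha}f_3^{24}\equiv f_3^{24}\pmod 5$. Since $f_3^{24}$ is a power series in $q^3$, the coefficients of $q^{3m+i}$ vanish for $i\in\{1,2\}$, and substituting $n\mapsto 3^{4\alpha-1}(3m+i)+3^{4\alpha}-1$ into the left-hand side yields exactly $b_{25}\bigl(2^{4\beta}3^{4\alpha}m+(i+3)\cdot 2^{4\beta}3^{4\alpha-1}-1\bigr)\equiv 0\pmod 5$.

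For the second congruence I would reverse the order of the two chains: first apply $G_3^{2\beta}$ to produce
\[\sum_{n\geq 0}b_{25}\bigl(3^{4\beta}n+3^{4\beta}-1\bigr)q^n\equiv f_1^{24}\pmod 5,\]
and then apply $g_2G_2^{2\alpha-1}$, whose effect on the right-hand side is $c_1^{\alpha}f_2^{24}\equiv f_2^{24}\pmod 5$. Only even powers of $q$ occur in $f_2^{24}$, so extracting the odd-index coefficients ($n=2m+1$) on the left and unwinding the nested substitution gives $b_{25}\bigl(2^{4\alpha}3^{4\beta}m+2^{4\alpha-1}3^{4\beta+1}-1\bigr)\equiv 0\pmod 5$, as required.

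No genuine obstacle arises once $c_1\equiv c_2\equiv 1\pmod 5$ is recognized: the proof is essentially a mechanical composition of the two prime-wise chains already established, and the only care required is bookkeeping the arithmetic progressions through the nested index substitutions in each direction. Note that the cases $\beta=0$ reduce to Theorem \ref{25regular} under the convention $G_2^0=G_3^0=\mathrm{id}$, so the argument uniformly covers the full range $\alpha\geq 1$, $\beta\geq 0$.
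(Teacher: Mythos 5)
Your proposal is correct and is essentially the paper's own argument: both rest on the mod-$5$ invariances $G_2^{2\beta}\{f_1^{24}\}\equiv f_1^{24}$ and $G_3^{2\beta}\{f_1^{24}\}\equiv f_1^{24}\pmod 5$ (equivalently $c_1\equiv c_2\equiv 1$) and then compose the two prime-wise reductions. The only cosmetic difference is that the paper packages the first step as the reduction identities $b_{25}(2^{4\beta}n+2^{4\beta}-1)\equiv b_{25}(n)$ and $b_{25}(3^{4\beta}n+3^{4\beta}-1)\equiv b_{25}(n)\pmod 5$ and then substitutes the congruences of Theorem \ref{25regular}, whereas you re-run the $g_3G_3^{2\alpha-1}$ (resp.\ $g_2G_2^{2\alpha-1}$) extraction explicitly on the reduced series.
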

\begin{proof}
By \eqref{a24-2-identity}  and \eqref{a24-3-identity}, we find that for any $\beta\ge 0$,
\begin{align*}
G_2^{2\beta}\{f_1^{24}\} \equiv f_1^{24} \pmod 5\quad \text{and}\quad
G_3^{2\beta}\{f_1^{24}\} \equiv f_1^{24} \pmod 5.
\end{align*}
It follows from \eqref{a24-2-identity3} that
\begin{equation}\label{a24-2-identity4}
b_{25}\left(2^{4\beta} n+2^{4\beta}-1\right)\equiv b_{25}(n)\pmod 5
\end{equation}
and
\begin{equation}\label{a24-3-identity4}
b_{25}\left(3^{4\beta} n+3^{4\beta}-1\right)\equiv b_{25}(n)\pmod 5.
\end{equation}
Substitute \eqref{25-regular-2-cong} and \eqref{25-regular-3-cong} into \eqref{a24-3-identity4} and \eqref{a24-2-identity4}, respectively and  the proof follows.
\end{proof}

We also obtain some congruences for other  $\ell$-regular partition functions.
Andrews et al. \cite[Theorem 3.5]{Andrews2010} showed that for $\alpha\geq0$ and $n\geq0$,
\begin{align}
  b_4\left(3^{2\alpha+2}n+\frac{11\cdot3^{2\alpha+1}-1}{8}\right)&\equiv0\pmod{2},\label{b4-cong-2}\\[5pt]
  b_4\left(3^{2\alpha+2}n+\frac{19\cdot3^{2\alpha+1}-1}{8}\right)&\equiv0\pmod{2}.\label{b4-cong-3}
\end{align}
The congruences \eqref{b4-cong-2} and \eqref{b4-cong-3} can be easily derived by Theorem \ref{3-congruence}.
Since the proofs are similar to the proof of Theorem \ref{25regular},
we  list the congruences for
$9$-, $17$-, $19$-regular partition functions without proof.
For any  $\alpha\ge 1$, $\beta\ge 0$, $n\ge 0$ and $i=1$ or $2$,
we have
\begin{align}\label{9-regular}
b_9\left(2^{2\alpha}n+\frac{5\cdot 2^{2\alpha-1}-1}{3}\right) \equiv 0 \pmod 3,
\end{align}
\begin{align*}
b_9\left(\frac{2^{2\beta}-1}{3}\right)\equiv 1 \pmod 3,
\end{align*}
\begin{align*}
b_{17}\left(2^{18\alpha}n+\frac{7\cdot2^{18\alpha-1}-2}{3}\right) \equiv 0 \pmod{17},
\end{align*}
and
\begin{align*}
b_{19}\left( 3^{10\alpha}n+\frac{(4i+9)\cdot 3^{10\alpha-1}-3}{4}\right) \equiv 0 \pmod {19}.
\end{align*}
Note that the congruence \eqref{9-regular}  is coincident  with Keith \cite[Theorem 3]{Keith-2014}.

\vskip 0.2cm

\noindent{\bf Acknowledgements.}
This work was supported by the 973 Project and the National Science Foundation of China.

\appendix
\section*{Appendix}

\begin{table}[H]
\centering
\caption{The values of $f(k)$, $g(k)$ and the initial values of
$A_k(\alpha)$ and $B_k(\alpha)$.}
\begin{tabular}{|p{0.5cm}<{\centering}|c|c|c|}
  \hline
  $k$ & $f(k)$ & $g(k)$ & Initial values \\
  \hline
  1 & $-2^3$ & 0 & \makecell[cc]{$A_1(0)=1$;
  $A_1(1)=-8$.}\\
  \hline
  2 & $2^3\cdot13$ & $-2^{14}$ & \makecell[cc]{$A_2(0)=1, A_2(2)=104;$
  $A_2(1)=128, A_2(3)=-3072;$\\ $B_2(0)=0, B_2(2)=-2048;$
  $B_2(1)=1, B_2(3)=104.$}\\
  \hline
  3 & $-2^6\cdot5\cdot11$ & $-2^{22}$ & \makecell[cc]{$A_3(0)=1, A_3(2)=-1472;$
  $A_3(1)=-2048, A_3(3)=3014656;$\\ $B_3(0)=0, B_3(2)=49152;$
  $B_3(1)=-24, B_3(3)=84480.$}\\
  \hline
\end{tabular}
\label{coeff_initial}
\end{table}

\begin{table}[H]
\centering
\caption{The values of $h(k)$, $r(k)$
and the initial values of $C_k(\beta)$ and $D_{k,i}(\beta)$.}
\footnotesize\begin{tabular}{|p{0.5cm}<{\centering}|c|c|c|}
  \hline
  $k$ & $h(k)$ & $r(k)$  & Initial values\\
  \hline
  1   & -3     &   0        & \makecell[cc]{$C_1(0)=1,C_1(1)=-3$.}  \\
  \hline
  2   & $3^2$      &   0         & \makecell[cc]{$C_2(0)=1,C_2(1)=9.$}   \\
  \hline
  3   & $-2^2\cdot 3$    & $-3^7$      & \makecell[cc]{$C_3(0)=1, C_3(2)=-12$;
  $C_3(1)=0,C_3(3)=-2187$;\\
  $D_{3,1}(0)=0, D_{3,1}(2)=-243$;
  $D_{3,1}(1)=1, D_{3,1}(3)=-12$.} \\
  \hline
  4   & $-2\cdot 3^2 \cdot 19$   & $-3^{10}$      & \makecell[cc]{$C_4(0)=1, C_4(2)=-99$;
  $C_4(1)=-243, C_4(3)=24057$;\\
  $D_{4,1}(0)=0,D_{4,1}(2)=2916$;
  $D_{4,1}(1)=-12,D_{4,1}(3)=4104$.}  \\
 \hline
  5   & $2^2\cdot 3^3\cdot 17$   &$-3^{13}$      &
  \makecell[cc]{
  $C_5(0)=1,C_5(2)=1107$;
  $C_5(1)=2187,C_5(3)=2421009$;\\
  $D_{5,1}(0)=0,D_{5,1}(2)=-21870$;
  $D_{5,1}(1)=90,D_{5,1}(3)=165240$.\\
  }\\
  \hline
  6   & $-2\cdot 3^2\cdot 17\cdot23$ &$-3^{16}$ & \makecell[cc]{$C_6(0)=1,C_6(2)=-7038$;
  $C_6(1)=-13122,C_6(3)=49305915$;\\
  $D_{6,1}(0)=0,D_{6,1}(2)=118098$;
  $D_{6,1}(1)=-486,D_{6,1}(3)=3420468$;\\
  $D_{6,2}(0)=0,D_{6,2}(2)=-177147$;
  $D_{6,2}(1)=1,D_{6,2}(3)=-7038$.\\}\\
  \hline
  7   & $2^2\cdot3^3\cdot 491$  &$-3^{19}$  & \makecell[cc]{
  $C_7(0)=1,C_7(2)=33345$;
  $C_7(1)=59049,C_7(3)=1968988905$;\\
  $D_{7,1}(0)=0,D_{7,1}(2)=-413343$;
  $D_{7,1}(1)=1701,D_{7,1}(3)=90200628$;\\
  $D_{7,2}(0)=0,D_{7,2}(2)=3720087$;
  $D_{7,2}(1)=-21,D_{7,2}(3)=-1113588$.\\
}\\
  \hline
  8   & $- 2\cdot3^4\cdot5\cdot359$  &$-3^{22}$  & \makecell[cc]{
  $C_8(0)=1,C_8(2)=-113643$;
  $C_8(1)=-177147,C_8(3)=20131516521$;\\
  $D_{8,2}(0)=0,D_{8,2}(2)=-44641044$;
  $D_{8,2}(1)=252,D_{8,2}(3)=-73279080$.\\
}\\
  \hline
\end{tabular}
\label{coeff_initial-3}
\end{table}

\begin{table}[H]
  \centering
  \fontsize{9}{8}\selectfont
    \begin{tabular}{|c|c|c|c||c|c|c|c|c|c|c|c|}
    \hline
    \multirow{2}{*}{$m$}&
    \multicolumn{3}{c||}{$\mu_m(k)$}&\multicolumn{8}{c|}{$\nu_m(k)$}
    \cr\cline{2-12}
    &$k=1$&$k=2$&$k=3$&$k=1$&$k=2$&$k=3$&$k=4$&$k=5$&$k=6$&$k=7$&$k=8$\cr
    \hline
    2 & 1 & 2 & 1 &1 &  1 &  2  &  1  &  1   &   2  &  2   & 1\cr\hline
   3 & 1 & 3 & 1 & 1 & 1  &  2  &  1  &   1  &   2  &   1  & 1\cr\hline
   5 & 1 & 5 &  2& 1 & 1  &   4 &  3  &    1 &    5 &   4  & 2\cr\hline
   7 & 1 &  4&  7& 1 & 1  &   8  &  4  &    8 &   7  &   1  & 1\cr\hline
   11 & 1 &  5&  2& 1 & 1 &   5  &  3  &     5&     5&     5& 11\cr\hline
   13 & 1 &  2&  7& 1 & 1 &   7  &  3  &     6&     13&   7  &7\cr\hline
   17 & 1 &   9& 17& 1 & 1  &  16 &  9  &     2&     2&    16 &8\cr\hline
   19& 1 &  19&  10& 1 & 1 &  20  &  2  &     20&     5&  20   & 9\cr
    \hline
    \end{tabular}
  \caption{Values for $\mu_m(k)$ and $\nu_m(k)$.}
  \label{values_u_m(k)}
\end{table}

\end{document}